\numberwithin{equation}{section}
\newtheorem{theorem}{Theorem}[section]
\newtheorem{lemma}[theorem]{Lemma}
\newtheorem{corollary}[theorem]{Corollary}
\newtheorem{question}[theorem]{Question}
\theoremstyle{definition}
\newtheorem{definition}[theorem]{Definition} 
\newtheorem{construction}[theorem]{Construction}
\newtheorem{example}[theorem]{Example}
\begin{document}

 
\title{The regularity and $h$-polynomial of Cameron-Walker graphs}
\thanks{\today}

\author[T. Hibi]{Takayuki Hibi}
\address{Department of Pure and Applied Mathematics, Graduate School
of Information Science and Technology, Osaka University, Suita, Osaka
565-0871, Japan}
\email{hibi@math.sci.osaka-u.ac.jp}

\author[K. Kimura]{Kyouko Kimura}
\address{
Department of Mathematics, 
Faculty of Science, 
Shizuoka University, 836 Ohya, Suruga-ku, Shizuoka 422-8529, Japan}
\email{kimura.kyoko.a@shizuoka.ac.jp}

\author[K. Matsuda]{Kazunori Matsuda}
\address{Kitami Institute of Technology, Kitami, Hokkaido 090-8507, Japan}
\email{kaz-matsuda@mail.kitami-it.ac.jp}
 
\author[A. Van Tuyl]{Adam Van Tuyl}
\address{Department of Mathematics and Statistics\\
McMaster University, Hamilton, ON, L8S 4L8, Canada}
\email{vantuyl@math.mcmaster.ca}

\keywords{Castelnuovo-Mumford regularity, $h$-polynomials, Hilbert Series,
edge ideals}
\subjclass[2010]{13D02, 13D40, 05C70, 05E40}
 
\begin{abstract}
  Fix an integer $n \geq 1$, and consider the set of all connected finite simple
  graphs on $n$ vertices.  For each $G$ in this set, let
  $I(G)$ denote the edge ideal of $G$ in the polynomial ring
  $R = K[x_1,\ldots,x_n]$.  We initiate a study
  of the set  $\mathcal{RD}(n) \subseteq \mathbb{N}^2$ consisting of
  all the pairs $(r,d)$ where
  $r = {\rm reg}(R/I(G))$, the Castelnuovo-Mumford regularity,
  and $d = \deg h_{R/I(G)}(t)$, the degree of the
  $h$-polynomial, as we vary over all the connected graphs on $n$
  vertices. In particular, we identify sets $A(n)$ and $B(n)$
  such that $A(n) \subseteq \mathcal{RD}(n) \subseteq B(n)$.  When
  we restrict to the family of Cameron-Walker graphs on $n$ vertices,
  we can completely characterize all the possible $(r,d)$.
 \end{abstract}
 
\maketitle


\section{Introduction}

Let $R =K[x_1,\ldots,x_n]$ with $K$ a field, and let $I$ be a homogeneous
ideal of $R$.  In this paper we are interested in comparing
$r = {\rm reg}(R/I)$, the regularity of $R/I$, with $d = \deg h_{R/I}(t)$,
the degree
of the $h$-polynomial of $R/I$ (formal definitions
are postponed until the next section) for the class of edge ideals.
The first and third authors \cite{HM1,HM2} first showed that
for any integers $1 \leq r,d$, there exists a monomial ideal $I_{r,d}$
(and in fact, a lexsegment ideal)
such that ${\rm reg}(R/I_{r,d}) = r$ and $\deg h_{R/I_{r,d}}(t) =d$.
In collaboration with the last author \cite{HMVT}, it was later
shown that the ideal $I_{r,d}$ could in fact be an edge ideal.

Given these results, it may appear that there is no relationship between
the regularity and the degree of the $h$-polynomial, even in the case
that $I = I(G)$ is an edge ideal of a graph $G$.
However, our starting point is the following inequality
found in \cite[Theorem 13]{HMVT}; namely, if $G$ is a graph on
$n$ vertices, then
\begin{equation}\label{generalbound}
  {\rm reg}(R/I(G)) + \deg h_{R/I(G)}(t) \leq n,
\end{equation}
which gives a bound on the possible values of $r$ and $d$. 
If we fix an $n$ and compute
$(r,d) = ({\rm reg}(R/I(G)), \deg h_{R/I(G)}(t))$ for all connected graphs $G$ 
on $n = |V(G)|$ vertices, and plot the corresponding pairs, some interesting patterns
appear.  For example, Figure \ref{figure1} shows all the possible
$(r,d)$ for graphs on 8, respectively, 9 vertices.  In particular,
it is tantalizing to ask if the set of all possible $(r,d)$ for
a fixed $n$ can be described as the integer points of some convex
lattice polytope.
  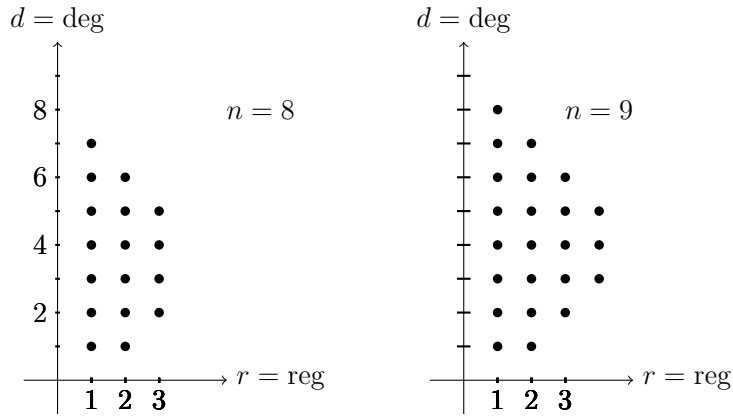
\begin{figure}
\begin{tikzpicture}[thick,scale=0.9, every node/.style={scale=0.9}]
\draw[thin,->] (-0.5,0) -- (2.5,0) node[right] {$r = {\rm reg}$};
\draw[thin,->] (0,-0.5) -- (0,5.0) node[above] {$d = \deg$};

\foreach \x [count=\xi starting from 0] in {1,2,3,4,5,6,7,8,9}{
        \draw (1pt,\x/2) -- (-1pt,\x/2);
    \ifodd\xi
        \node[anchor=east] at (0,\x/2) {$\x$};
    \fi
 \draw (1/2,1pt) -- (1/2,-1pt);
 \draw (2/2,1pt) -- (2/2,-1pt);
 \draw (3/2,1pt) -- (3/2,-1pt);
     \node[anchor=north] at (1/2,0) {$1$};
     \node[anchor=north] at (2/2,0) {$2$};
     \node[anchor=north] at (3/2,0) {$3$};
}

\foreach \point in {(1/2,1/2),(1/2,2/2),(1/2,3/2),(1/2,4/2),(1/2,5/2),
(1/2,6/2),(1/2,7/2),
(2/2,1/2),(2/2,2/2),(2/2,3/2),(2/2,4/2),(2/2,5/2),
(2/2,6/2),
(3/2,2/2),(3/2,3/2),(3/2,4/2),(3/2,5/2)}{
    \fill \point circle (2pt);
}

\draw[thin,->] (5.5,0) -- (8.5,0) node[right] {$r = {\rm reg}$};
\draw[thin,->] (6,-0.5) -- (6,5.0) node[above] {$d = \deg$};

\foreach \x [count=\xi starting from 0] in {1,2,3,4,5,6,7,8,9}{
    \draw (5.9,\x/2) -- (6.1,\x/2);
    \ifodd\xi
        \node[anchor=east] at (0,\x/2) {$\x$};
    \fi
\draw (6+1/2,1pt) -- (6+1/2,-1pt);
 \draw (6+2/2,1pt) -- (6+2/2,-1pt);
 \draw (6+3/2,1pt) -- (6+3/2,-1pt);
     \node[anchor=north] at (6+1/2,0) {$1$};
     \node[anchor=north] at (6+2/2,0) {$2$};
     \node[anchor=north] at (6+3/2,0) {$3$};
}

\
\foreach \point in {(6+1/2,1/2),(6+1/2,2/2),(6+1/2,3/2),(6+1/2,4/2),(6+1/2,5/2),(6+1/2,6/2),(6+1/2,7/2),(6+1/2,8/2),
(6+2/2,1/2),(6+2/2,2/2),(6+2/2,3/2),(6+2/2,4/2),(6+2/2,5/2),(6+2/2,6/2),(7,7/2),
(6+3/2,2/2),(6+3/2,3/2),(6+3/2,4/2),(6+3/2,5/2),(6+3/2,6/2),
(6+4/2,3/2),(6+4/2,4/2),(6+4/2,5/2)}{
    \fill \point circle (2pt);
}

\node at (3,4) {$n=8$};
\node at (8,4) {$n=9$};
\end{tikzpicture}
\caption{Possible $(r,d) = ({\rm reg}(R/I(G)),\deg h_{R/I(G)}(t))$
  for all connected graphs $G$ on 8 and 9 vertices}\label{figure1}
\end{figure}

To study this question, for each integer $n \geq 1$ we define:
\begin{eqnarray*}
\mathcal{RD}(n) & =& \left\{(r,d) ~\left|~
\begin{array}{c}
  \mbox{there exists a connected graph $G$ with $|V(G)|=n$} \\
  \mbox{and $(r,d) = ({\rm reg}(R/I(G)),\deg h_{R/I(G)}(t))$}
\end{array}
\right \}\right. \subseteq \mathbb{N}^2.
\end{eqnarray*}
One of our main results (see Theorem \ref{maintheorem})
describes
finite subsets $A(n), B(n) \subseteq \mathbb{N}^2$ such that
$A(n) \subseteq \mathcal{RD}(n) \subseteq B(n)$.  Both $A(n)$ and $B(n)$ are the
integer points of convex lattice polytopes.

Our results are stronger when we restrict to the connected graphs on
$n$ vertices that are also {\it Cameron-Walker graphs}.  Cameron-Walker
graphs are those graphs $G$ which satisfy the property that
the induced matching number of $G$ equals the matching number of $G$;
this family was first characterized by Cameron and Walker \cite{CW}.  From a
combinatorial commutative algebra point-of-view, these graphs
are attractive since ${\rm reg}(R/I(G))$ is also
equal to the induced matching number. In fact, a number of their
algebraic properties have been developed, e.g., see \cite{HHKO,HKMT}.
The following classification is one of our main results:

\begin{theorem}[Theorem \ref{maintheorem1}] Fix an $n \geq 5$.
  Then there exists a Cameron-Walker graph $G$ on $n$ vertices
  with ${\rm reg}(R/I(G)) = r$ and $\deg h_{R/I(G)}(t) = d$ if and only if
  \begin{enumerate}
  \item[$\bullet$] $2 \leq r \leq \lfloor \frac{n-1}{2} \rfloor$,
  \item[$\bullet$] $r \leq d \leq n -r $, and 
  \item[$\bullet$] $d \geq -2r+n+1$.
    \end{enumerate}
\end{theorem}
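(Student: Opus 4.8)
I would prove the two implications separately; in both directions the inequalities involving $r$ are elementary and the control of $d=\deg h_{R/I(G)}(t)$ is the substantive point. Recall the Cameron--Walker structure theorem \cite{CW}: a connected Cameron--Walker graph $G$ which is not a star, a triangle, or a star of triangles arises from a connected bipartite graph $H$ with parts $V_1=\{v_1,\dots,v_s\}$, $V_2=\{w_1,\dots,w_t\}$ by attaching $a_i\ge 1$ pendant edges to each $v_i$ and $b_j\ge 0$ pendant triangles to each $w_j$. Thus $n=s+t+\sum_i a_i+2\sum_j b_j$, and one knows $\mathrm{reg}(R/I(G))=\mathrm{im}(G)=s+\sum_j b_j=:r$. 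Since such a $G$ has $s\ge 2$, or else $s=1$ and $\sum_j b_j\ge 1$, we get $r\ge 2$; and from $\sum_i a_i\ge s$ and $t\ge 1$ we get $n\ge 2s+2\sum_j b_j+t\ge 2r+1$, i.e.\ $r\le\lfloor(n-1)/2\rfloor$. The bound $d\le n-r$ is exactly \eqref{generalbound}. So the only nontrivial part of necessity is the pair of lower bounds $d\ge r$ and $d\ge n+1-2r$ (note $n+1-2r=t+\sum_i a_i-s+1$).

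To handle $d$, the plan is to compute $h_{R/I(G)}(t)$ recursively. For a vertex $v$, the exact sequence $0\to(R/(I(G):x_v))(-1)\to R/I(G)\to R/(I(G)+(x_v))\to 0$ (first map multiplication by $x_v$) gives
$$H_{R/I(G)}(t)=\frac{t}{1-t}\,H_{R/I(G\setminus N[v])}(t)+H_{R/I(G\setminus v)}(t),$$
where isolated vertices produced by the deletions are discarded. Applying this to a pendant leaf, and to a vertex of a pendant triangle (whose closed-neighbourhood deletion turns an adjacent triangle into a pendant edge), reduces $G$ to graphs with fewer leaves and triangles. I would run this as an induction on a class slightly larger than Cameron--Walker graphs, chosen so that the intermediate graphs produced by the recursion stay inside it, and prove first that $\deg h_{R/I(G)}(t)=n-s$ whenever $G$ has no pendant triangle, and then that each pendant triangle lowers the degree in a controlled way, never dropping below $\max\{r,\,n+1-2r\}$. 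This gives both lower bounds simultaneously.

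For sufficiency, given $(r,d)$ as in the statement set $k:=(n-r)-d$; the three conditions say precisely that $0\le k\le\min\{r-1,\,n-2r\}$. The plan is to build a Cameron--Walker graph $G_k$ on $n$ vertices with $|V_1|=s=r-k$ and exactly $k$ pendant triangles: take $H$ to be a star $K_{1,s}$ with centre in $V_2$, adjoin $k$ more $V_2$-vertices each joined to one fixed vertex of $V_1$ and carrying a single pendant triangle, and distribute the remaining $n-r-2k-1$ vertices (a number that is $\ge s$ precisely when $k\le n-2r-1$) as extra pendant edges among the $v_i$; the boundary case $k=n-2r$ needs an analogous configuration in which every $w_j$ carries a triangle and $a_i=1$ for all $i$. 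Then $\mathrm{reg}(R/I(G_k))=s+k=r$ by construction, and the same Hilbert-series recursion---induction on $k$, trading one pendant triangle for two pendant edges to pass from $G_k$ to $G_{k-1}$---should give $\deg h_{R/I(G_k)}(t)=n-r-k=d$. The endpoints $k=0$ and $k=\min\{r-1,n-2r\}$ recover the triangle-free construction (with $d=n-r$) and the one with the maximal number of triangles (with $d=\max\{r,\,n+1-2r\}$).

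The main obstacle, in both directions, is that the degree of the numerator $h_{R/I(G)}(t)$ can a priori drop through cancellation between the leading terms of the two summands in the recursion. The way around this is to carry extra information through the induction---the sign, or the first few leading coefficients, of $h_{R/I(G)}(t)$---so that its top-degree coefficient is certifiably nonzero at each step. I expect the pendant-triangle step to be the delicate one: it enlarges the ambient class of graphs one must induct over, and it is there that the degree actually changes.
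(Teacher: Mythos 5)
Your skeleton matches the paper's: necessity comes from the Cameron--Walker structure theorem plus the inequality \eqref{generalbound}, and sufficiency from an explicit one-parameter family realizing each admissible $(r,d)$. Your arguments for $r\ge 2$ and $r\le\lfloor(n-1)/2\rfloor$ are exactly the ones in the paper, and your sufficiency construction is a legitimate variant of the paper's family $G_{a,b,c}$ of Construction \ref{const:CWabc} (the paper takes the bipartite part to be $K_{a,b}$ with $a=d+2r-n$, $b=n-2r$ and $c=n-r-d$ pendant triangles, which covers all cases uniformly via Lemma \ref{specialfamily} and avoids the boundary-case split you need at $k=n-2r$).

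The genuine gap is the control of $d=\deg h_{R/I(G)}(t)$, which is where all the content lives. The paper does not run a Hilbert-series recursion at all: it quotes the closed formula of Theorem \ref{CWformulas}(2), namely $\deg h_{R/I(G)}(t)=\dim R/I(G)=\sum_i s_i+\sum_j\max\{t_j,1\}$, proved in \cite[Proposition 1.3]{HKMT}. Once that formula is granted, both lower bounds $d\ge r$ and $d\ge n+1-2r$ are two-line arithmetic identities in $m,p,s_i,t_j$ (e.g.\ $d+2r-n-1=\bigl(\sum_j t_j+|\{j: t_j=0\}|-p\bigr)+(m-1)\ge 0$), and the sufficiency computation for $G_{a,b,c}$ is immediate. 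Your plan replaces this citation with an induction on the recursion $H_{R/I(G)}(t)=\frac{t}{1-t}H_{R/I(G\setminus N[v])}(t)+H_{R/I(G\setminus v)}(t)$, and you yourself identify the obstruction: when $\dim$ of the two branches satisfies $\dim(G\setminus N[v])+1=\dim(G\setminus v)$, the top-degree coefficients can cancel, and the intermediate graphs produced by deleting $N[v]$ (which can disconnect the bipartite part and strand leaves) need not stay in any obviously manageable class. "Carry the sign of the leading coefficient through the induction" is a plan, not a proof; making it close is precisely the content of \cite[Proposition 1.3]{HKMT} (equivalently, that $\deg h$ attains the a priori upper bound $\dim R/I(G)$ for these graphs). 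As written, both directions of your argument rest on this unestablished lemma, so the proof is incomplete until either that induction is actually carried out or the formula $\deg h_{R/I(G)}(t)=\dim R/I(G)$ is imported from the literature as the paper does.
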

\noindent
The pairs $(r,d)$ in the above result form the
integer points of a convex lattice polytope.

Our paper is structured as follows.  In Section 2 we present the required
background, including the undefined terminology from the introduction.
In Section 3, we derive some properties about $\mathcal{RD}(n)$.
In Section 4, we introduce
Cameron-Walker graphs, and describe some of their relevant
homological invariants.
In Section 5, we give our proof to Theorem \ref{maintheorem1}.
This result is used to count the number of integer
points in the lattice polytope defined by Theorem \ref{maintheorem1}.
Our final section includes some questions and observations about
the ratio $|CW_{\mathcal{RD}}(n)|/|\mathcal{RD}(n)|$ as we vary $n$.
  
As a final comment, although our discussion
in this introduction
has been restricted to monomial ideals, some results are known about
the pairs $(r,d)$ for non-monomial ideals.  In particular, the
first and third authors \cite{HM3} showed that for all $2 \leq r \leq d$,
there is a binomial edge ideal $J_G$ with regularity $r$ and $h$-polynomial
of degree $d$; Kahle and Kr\"usemann \cite{KK} have
shown that for each integer $k \geq 0$, there exists a binomial
edge ideal $J_G$ with $r-d = k$.  
Finally, Favacchio, Keiper, and the last author \cite{FKVT} have
shown that if $4 \leq r \leq d$, there is a toric ideal
of a graph with regularity $r$ and $h$-polynomial with degree $d$.

\noindent
{\bf Acknowledgments.} 
Hibi, Kimura, and Matsuda's research was supported by JSPS KAKENHI 
19H00637, 15K17507, and 17K14165. 
Van Tuyl's research was supported by NSERC Discovery Grant 2019-05412 . 
This work was supported by the Research Institute for Mathematical Sciences, 
an International Joint Usage/Research Center located in Kyoto University. 

\section{Background}

In this section, we recall some of the relevant prerequisites about 
homological invariants, graph theory, and combinatorial commutative
algebra.  We have also include the formal definitions of the
undefined terms from the introduction.

\subsection{Homological Invariants}
Let $R = K[x_1, \ldots, x_n]$ denote the polynomial ring in $n$ variables
over a field $K$ with $\deg x_i = 1$ for all $i$.  For any ideal
$I$ of $R$, the {\it dimension} of $R/I$, denoted $\dim R/I$, is
the length of the longest chain of prime ideals in $R/I$.

If $I \subseteq R$
is a homogeneous ideal, then the {\it Hilbert series} of $R/I$ is
$$H_{R/I}(t) = \sum_{i \geq 0} \dim_K [R/I]_it^i$$
where $[R/I]_i$ denotes the $i$-th graded piece of $R/I$.
If $\dim R/I = d$, then 
the Hilbert series of $R/I$ is the form 
$$
H_{R/I}(t) = \frac{h_{0} + h_{1}t + h_{2}t^{2} + \cdots + h_{s}t^{s}}{(1 - t)^d}
= \frac{h_{R/I}(t)}{(1-t)^d}, 
$$
where each $h_{i} \in \mathbb{Z}$ (\cite[Proposition 4.4.1]{BH})
and $h_{R/I}(1) \neq 0$.
We say that 
$$
h_{R/I}(t) = h_{0} + h_{1}t + h_{2}t^{2} + \cdots + h_{s}t^{s}
$$ 
with $h_{s} \neq 0$ is the {\em $h$-polynomial} of $R/I$.  

The
({\em Castelnuovo-Mumford}) {\em regularity} of $R/I$,
 with $I$ homogeneous, is
$$
{\rm reg}(R/I)=  \max\{j-i ~|~ \beta_{i,j}(R/I) \neq 0\}
$$
where $\beta_{i,j}(R/I)$ denotes an $(i,j)$-th graded Betti number
in the minimal graded free resolution of $R/I$.  (For more details
see, for example,  \cite[Section 18]{Peeva}.)

\subsection{Graph theory}
Let $G = (V(G), E(G))$ be a finite simple graph
(i.e., a graph with no loops and no multiple edges) on the vertex set 
$V(G) = \{x_{1}, \ldots, x_{n}\}$ and edge set $E(G)$.

A subset $S \subset V(G)$ is an {\em independent set} of $G$ if
$\{x_{i}, x_{j}\} \not\in E(G)$ for all $x_{i}, x_{j} \in S$. 
In particular, the empty set $\emptyset$ is an independent set. 

A subset $\mathcal{M} \subset E(G)$ is a {\em matching} of $G$ 
if $e \cap e' = \emptyset$ for any $e, e' \in \mathcal{M}$ with $e \neq e'$. 
A matching $\mathcal{M}$ of $G$ is called an {\em induced matching} of $G$ if 
for $e, e' \in \mathcal{M}$ with $e \neq e'$, there is no edge $f \in E(G)$ 
with $e \cap f \neq \emptyset$ and $e' \cap f \neq \emptyset$. 
The {\em matching number} ${\rm m}(G)$ of $G$ is the maximum cardinality 
of the matchings of $G$. 
Similarly, the {\em induced matching number} ${\rm im}(G)$ of $G$
is the maximum cardinality  of the induced matchings of $G$. Because
an induced matching is also a matching, we always have
${\rm im}(G) \leq {\rm m}(G)$.

The  $S$-suspension (\cite[p.313]{HKM}) of a graph $G$ plays
an important role in our results;  we recall this construction.
If $G = (V(G),E(G))$ is a finite simple graph, then for
any independent set $S \subset V(G) = \{x_1,\ldots,x_n\}$,
we construct the graph $G^{S}$ with the vertex and the edge sets given by:
\begin{enumerate}
\item[$\bullet$] $V(G^{S}) = V(G) \cup \{x_{n + 1}\}$, where $x_{n + 1}$
  is a new vertex, and
\item[$\bullet$] $E(G^{S}) = E(G) \cup \left\{ \{x_{i}, x_{n + 1}\} ~|~ x_{i} \not\in S \right\}.$
  \end{enumerate}
That is, we add a new vertex $x_{n+1}$ and join it to every vertex {\it not}
in $S$.  The graph $G^{S}$ is called the {\em $S$-suspension} of $G$.  Note
that this construction still holds if $S = \emptyset$.

\subsection{Combinatorial commutative algebra}  Graphs can be studied
algebraically by employing the edge ideal construction.  If
$G=(V(G),E(G))$ is a finite simple graph on $V(G) = \{x_1,\ldots,x_n\}$,
we associate with $G$ the quadratic square-free monomial ideal
$$
I(G) = \langle x_{i}x_{j} ~|~ \{x_{i}, x_{j}\} \in E(G) \rangle \subseteq
R = K[x_1,\ldots,x_n]. 
$$
The ideal $I(G)$ is the {\it edge ideal} of the graph $G$.  We sometimes
write $K[V(G)]$ for the polynomial ring $K[x ~|~ x \in V(G)]$.

Under this construction, invariants of $G$ and homological invariants of
$I(G)$ are then related.  For example,
it is known that
$$\dim R/I(G) =  \max\left.\left\{ |S| ~\right|~ S \ \text{is an independent set of}\  G \right\}.$$  Another
relevant example of this behaviour is the following lemma.
\begin{lemma}\label{regbounds}
  For any finite simple graph $G = (V(G),E(G))$
  on $n$ vertices, we have
  $${\rm im}(G) \leq {\rm reg}(R/I(G)) \leq {\rm m}(G)
  \leq \left\lfloor \frac{n}{2} \right\rfloor.$$
\end{lemma}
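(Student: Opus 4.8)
The plan is to establish the three inequalities separately, relying on known results from the literature for the two outer inequalities and a direct counting argument for the innermost one. The rightmost inequality ${\rm m}(G) \leq \lfloor n/2 \rfloor$ is immediate: any matching $\mathcal{M}$ of $G$ consists of pairwise disjoint edges, so the $2|\mathcal{M}|$ endpoints are distinct vertices among the $n$ vertices of $G$, forcing $|\mathcal{M}| \leq n/2$, and hence $|\mathcal{M}| \leq \lfloor n/2 \rfloor$ since $|\mathcal{M}|$ is an integer. Taking the maximum over all matchings gives the bound.

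For the leftmost inequality ${\rm im}(G) \leq {\rm reg}(R/I(G))$, I would invoke the well-known result of Katzman (and independently Hà--Van Tuyl in the generalized hypergraph setting) stating that for any graph $G$, the induced matching number is a lower bound for the regularity of the edge ideal quotient; concretely, if $\{e_1,\dots,e_k\}$ is an induced matching then the $2k$ vertices involved span an induced subgraph that is a disjoint union of $k$ edges, and one shows $\beta_{k,\,2k}(R/I(G)) \neq 0$ by a localization/restriction argument on the Stanley--Reisner complex, whence ${\rm reg}(R/I(G)) \geq 2k - k = k$. Since the paper is free to cite earlier literature, I would simply reference this as a standard fact.

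The middle inequality ${\rm reg}(R/I(G)) \leq {\rm m}(G)$ is the substantive one, and here I would cite the theorem of Hà--Van Tuyl (with the sharpened form due to Woodroofe) that the regularity of $R/I(G)$ is bounded above by the matching number of $G$. The cleanest route, if one wanted a self-contained argument rather than a citation, is induction on the number of edges using the standard short exact sequences: pick an edge $e = \{x_i, x_j\}$ and compare $I(G)$ with $I(G \setminus e)$ and with the colon ideal $(I(G) : x_ix_j)$, which is the edge ideal of a graph on fewer vertices whose matching number drops appropriately; then apply the inequality ${\rm reg}(R/I) \leq \max\{{\rm reg}(R/(I:f)) + 1,\ {\rm reg}(R/(I,f))\}$ and induct. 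However, since this is a background section, I expect the intended proof is simply to assemble the three pieces from \cite{Peeva} and the standard edge-ideal literature.

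The main obstacle — to the extent there is one in a background lemma — is the middle inequality ${\rm reg}(R/I(G)) \leq {\rm m}(G)$, since unlike the other two it is not elementary and genuinely requires the homological machinery (short exact sequences and induction, or Woodroofe's argument via the Lozin transformation / suspension-type reductions). The other two inequalities are essentially combinatorial bookkeeping and a direct Betti-number computation, respectively.
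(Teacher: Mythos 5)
Your proposal is correct and matches the paper's proof: the first inequality is cited to Katzman, the second to H\`a--Van Tuyl, and the last follows from the same vertex-counting observation that a matching of size ${\rm m}(G)$ occupies $2{\rm m}(G)$ distinct vertices. The additional proof sketches you supply for the two cited results are accurate but not needed, since the paper likewise treats these as standard facts from the literature.
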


\begin{proof}
  The first inequality is \cite[Lemma 2.2]{Katzman}, and the second
  inequality is \cite[Theorem 6.7]{HVT}.  The last inequality follows from
  the observation that ${\rm m}(G)$ edges in $G$ contain $2{\rm m}(G)$
  distinct vertices, so $2{\rm m}(G) \leq n$.
  \end{proof}

If $G$ is a graph with an $S$-suspension $G^S$, then by
virtue of \cite[Lemma 1.5]{HKM}, we have
some relationships between the homological invariants of $I(G)$
and $I(G^S)$.  

\begin{lemma}\label{S-suspension}
  Let $G$ be a finite simple graph on $V(G) = \{x_1,\ldots,x_n\}$, and
  suppose that $G^S$ is the $S$-suspension of $G$ for some independent
  set $S$ of $V(G)$.  If $I(G) \subseteq R =K[x_1,\ldots,x_n]$ and
  $I(G^S) \subseteq  R' = K[x_1,\ldots,x_n,x_{n+1}]$
  are the respective edge ideals,
  then
  \begin{enumerate}
	\item ${\rm reg}(R'/I(G^{S})) = {\rm reg}(R/I(G))$ if $G$ has no isolated vertices. 
	\item 
  \begin{displaymath}
    H_{R'/I(G^S)} (t) = H_{R/I(G)} (t) + \frac{t}{(1-t)^{|S|+1}}. 
  \end{displaymath}
  In particular, 
  $\deg h_{R'/I(G^{S})}(t) = \deg h_{R/I(G)}(t)$ if $|S| = \dim R/I(G) - 1$. 
	\item $\dim R'/I(G^{S}) = \dim R/I(G)$ if $|S| \leq \dim R/I(G) - 1$. 
\end{enumerate} 
\end{lemma}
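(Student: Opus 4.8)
The plan is to exhibit $R'/I(G^{S})$ as the middle term of a short exact sequence whose outer terms are $R/I(G)$ and a polynomial ring, and to read off all three assertions from that sequence. Write $P = \langle x_{i} \mid 1 \le i \le n,\ x_{i} \notin S \rangle \subseteq R'$ for the prime ideal generated by the variables outside $S$. Since the edges of $G^{S}$ through the new vertex contribute exactly the monomials $x_{i}x_{n+1}$ with $x_{i} \notin S$, we have $I(G^{S}) = I(G)R' + x_{n+1}P$. As these are monomial ideals and $x_{n+1}$ divides no minimal generator of $I(G)R'$, a direct computation gives $(I(G^{S}) : x_{n+1}) = I(G)R' + P$ and $(I(G^{S}), x_{n+1}) = I(G)R' + x_{n+1}R'$. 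The crucial simplification is that, because $S$ is independent, every generator $x_{i}x_{j}$ of $I(G)$ has an endpoint outside $S$ and hence lies in $P$, so $I(G)R' + P = P$. Thus $R'/(I(G^{S}):x_{n+1}) \cong R'/P$, a polynomial ring in the $|S|+1$ variables $\{x_{i} \mid x_{i} \in S\} \cup \{x_{n+1}\}$, and $R'/(I(G^{S}), x_{n+1}) \cong R/I(G)$. Plugging these into the standard sequence $0 \to (R'/(I:x_{n+1}))(-1) \xrightarrow{\,x_{n+1}\,} R'/I \to R'/(I,x_{n+1}) \to 0$ with $I = I(G^{S})$ produces the graded short exact sequence
\begin{displaymath}
0 \longrightarrow (R'/P)(-1) \xrightarrow{\ x_{n+1}\ } R'/I(G^{S}) \longrightarrow R/I(G) \longrightarrow 0 .
\end{displaymath}

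All three claims should then drop out formally. For (2), I would take Hilbert series in the sequence and use $H_{(R'/P)(-1)}(t) = t/(1-t)^{|S|+1}$ to obtain the stated identity; the ``in particular'' part follows because when $|S| = \dim R/I(G) - 1$ part (3) shows the two series share the denominator $(1-t)^{\dim R/I(G)}$, so $h_{R'/I(G^{S})}(t) = h_{R/I(G)}(t) + t$, and adding $t$ does not change the degree provided $\deg h_{R/I(G)}(t) \ge 1$ (which holds whenever $G$ has an edge, e.g.\ when $G$ has no isolated vertices). For (3), the dimension of the middle term of a short exact sequence is the maximum of the dimensions of the outer terms, so $\dim R'/I(G^{S}) = \max\{\,|S|+1,\ \dim R/I(G)\,\}$, which equals $\dim R/I(G)$ precisely when $|S| \le \dim R/I(G) - 1$. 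For (1), the regularity estimates along a short exact sequence (coming from the long exact sequence of $\mathrm{Tor}(-,K)$) give ${\rm reg}(R/I(G)) \le \max\{{\rm reg}((R'/P)(-1)) - 1,\ {\rm reg}(R'/I(G^{S}))\}$ and ${\rm reg}(R'/I(G^{S})) \le \max\{{\rm reg}((R'/P)(-1)),\ {\rm reg}(R/I(G))\}$; since ${\rm reg}((R'/P)(-1)) = 1$ these read ${\rm reg}(R/I(G)) \le {\rm reg}(R'/I(G^{S})) \le \max\{1,\ {\rm reg}(R/I(G))\}$, and if $G$ has no isolated vertices then Lemma \ref{regbounds} gives ${\rm reg}(R/I(G)) \ge {\rm im}(G) \ge 1$, which collapses everything to equality.

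The main obstacle is the monomial-ideal bookkeeping in the first paragraph --- verifying $(I(G^{S}):x_{n+1}) = I(G)R' + P$ and, using the independence of $S$, that this collapses to $P$ --- because once the short exact sequence is in hand the rest is a formal consequence of the behaviour of Hilbert series, Krull dimension, and Castelnuovo--Mumford regularity in short exact sequences. A secondary point to state carefully is that the ``in particular'' clause of (2) genuinely requires $\deg h_{R/I(G)}(t) \ge 1$: in the degenerate case $I(G) = 0$ one has $\deg h_{R/I(G)}(t) = 0$ but $\deg h_{R'/I(G^{S})}(t) = 1$, so the hypothesis that $G$ have an edge (implied by the no-isolated-vertices assumption used in (1)) should be carried along.
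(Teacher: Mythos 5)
Your proof is correct. Note that the paper does not actually prove this lemma---it is quoted from \cite[Lemma 1.5]{HKM}---so your argument supplies a self-contained justification rather than competing with an in-text proof. The key steps all check out: $I(G^{S}) = I(G)R' + x_{n+1}P$, the colon $(I(G^{S}):x_{n+1}) = I(G)R' + P$ collapses to $P$ precisely because the independence of $S$ forces every edge of $G$ to meet the complement of $S$, and the resulting sequence
\begin{displaymath}
0 \longrightarrow (R'/P)(-1) \xrightarrow{\ x_{n+1}\ } R'/I(G^{S}) \longrightarrow R/I(G) \longrightarrow 0
\end{displaymath}
with $R'/P$ a polynomial ring in $|S|+1$ variables yields (2) by additivity of Hilbert series, (3) from $\dim$ of the middle term being the max of the outer terms, and (1) from the standard regularity inequalities together with ${\rm reg}(R/I(G)) \geq {\rm im}(G) \geq 1$. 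Your caveat about the ``in particular'' clause of (2) is well taken and genuinely needed: when $I(G)=0$ one has $\deg h_{R/I(G)}(t)=0$ while the suspension has $h$-polynomial $1+t$, so the clause implicitly assumes $G$ has an edge (automatic wherever the paper invokes it). One small point worth making explicit if you write this up: for the $s=1$ boundary case of the degree claim you should observe that the leading coefficient $h_1 = n - \dim R/I(G) \geq 1$ cannot be cancelled by adding $t$, and that $h_{R/I(G)}(1)+1 = e(R/I(G))+1 \neq 0$ so the numerator over $(1-t)^{\dim}$ really is the $h$-polynomial; both are immediate but complete the bookkeeping.
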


Let $H_{1}$ and $H_{2}$ be finite simple graphs, and let
$H = H_{1} \cup H_{2}$ the disjoint union of $H_1$ and $H_2$. 
Then one has the following identities.

\begin{lemma}\label{disjoint_union}
Under the above situation, we have 
\begin{enumerate}
	\item ${\rm reg}(K[V(H)]/I(H)) = {\rm reg}(K[V(H_{1})]/I(H_1))+{\rm reg}(K[V(H_{2})]/I(H_2))$. 
	\item $\deg h_{K[V(H)]/I(H)}(t) = \deg h_{K[V(H_{1})]/I(H_1)}(t) + \deg h_{K[V(H_{2})]/I(H_2)}(t)$.
\end{enumerate}
\end{lemma}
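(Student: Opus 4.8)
The plan is to exploit that, since $H_1$ and $H_2$ have disjoint vertex sets and no edges join them, the edge ideal splits as $I(H) = I(H_1) + I(H_2)$ inside $K[V(H)] = K[V(H_1)] \otimes_K K[V(H_2)]$. Writing $R_i = K[V(H_i)]$ and $A_i = R_i/I(H_i)$, this gives a graded $K$-algebra isomorphism $K[V(H)]/I(H) \cong A_1 \otimes_K A_2$, so that both claimed identities become statements about invariants of a tensor product over a field.

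For part (2), the Hilbert series is multiplicative under $\otimes_K$, so
$$H_{A_1 \otimes_K A_2}(t) = H_{A_1}(t)\, H_{A_2}(t) = \frac{h_{A_1}(t)\, h_{A_2}(t)}{(1-t)^{d_1 + d_2}},$$
where $d_i = \dim A_i$. A maximum independent set of $H$ is the disjoint union of maximum independent sets of $H_1$ and $H_2$, whence $\dim(A_1 \otimes_K A_2) = d_1 + d_2$; thus the fraction above is already reduced and $h_{A_1} h_{A_2}$ is the $h$-polynomial of $K[V(H)]/I(H)$. (Equivalently, $h_{A_1}(1) h_{A_2}(1) \neq 0$ rules out further cancellation of $(1-t)$ factors.) Since $h_{A_1}$ and $h_{A_2}$ are nonzero, their product has degree $\deg h_{A_1} + \deg h_{A_2}$, giving the result.

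For part (1), the tool is the K\"unneth formula over the field $K$: because $A_1$ and $A_2$ involve disjoint variables, there is a graded isomorphism
$${\rm Tor}^{R_1 \otimes_K R_2}_i(A_1 \otimes_K A_2, K) \cong \bigoplus_{j+k=i} {\rm Tor}^{R_1}_j(A_1, K) \otimes_K {\rm Tor}^{R_2}_k(A_2, K).$$
Reading off internal degrees yields
$$\beta_{i,\ell}(A_1 \otimes_K A_2) = \sum_{\substack{j+k=i\\ a+b=\ell}} \beta_{j,a}(A_1)\, \beta_{k,b}(A_2).$$
Since ${\rm reg} = \max\{\ell - i \mid \beta_{i,\ell} \neq 0\}$ and $(a+b)-(j+k) = (a-j)+(b-k)$, the bound ${\rm reg}(A_1 \otimes_K A_2) \leq {\rm reg}(A_1) + {\rm reg}(A_2)$ is immediate; conversely, choosing $(j,a)$ and $(k,b)$ that realize ${\rm reg}(A_1)$ and ${\rm reg}(A_2)$ makes the corresponding summand a tensor product of nonzero $K$-vector spaces, hence nonzero, so $\beta_{j+k,a+b}(A_1 \otimes_K A_2) \neq 0$ and equality holds.

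The step I would handle most carefully is the K\"unneth isomorphism together with its grading: concretely, one checks that tensoring the minimal graded free resolutions of $A_1$ over $R_1$ and of $A_2$ over $R_2$ produces a minimal graded free resolution of $A_1 \otimes_K A_2$ over $R_1 \otimes_K R_2$, where minimality survives because every entry of the total differential lies in the homogeneous maximal ideal. This is standard over a field and could simply be cited; the remaining bookkeeping for both parts is routine.
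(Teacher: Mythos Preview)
Your proof is correct and follows the same approach as the paper: both rely on the isomorphism $K[V(H)]/I(H) \cong (K[V(H_1)]/I(H_1)) \otimes_K (K[V(H_2)]/I(H_2))$, with the paper simply citing this fact while you supply the standard details (multiplicativity of Hilbert series for part~(2) and the K\"unneth/tensor-product-of-resolutions argument for part~(1)).
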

\begin{proof}
The result follows from the fact that $K[V(H)]/I(H)$ is the tensor product of $K[V(H_{1})]/I(H_1)$ and $K[V(H_{2})]/I(H_2)$. 
\end{proof}


\section{Properties of the set $\mathcal{RD}(n)$}

Recall from the introduction
that for each $n \geq 1$, the set $\mathcal{RD}(n)$
compares the regularity and the degree of the $h$-polynomial over
all connected graphs on $n$ vertices.  The purpose
of this section is to derive some basic properties of this set.
We begin with the following observations, which relies heavily on
the $S$-suspension construction.

\begin{lemma}\label{BasicLemma}
  For all $n \geq 1$, we have $\mathcal{RD}(n) \subseteq \mathcal{RD}(n+1)$.
\end{lemma}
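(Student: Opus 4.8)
The plan is to show that any connected graph on $n$ vertices realizing a pair $(r,d)$ can be enlarged by one vertex into a connected graph on $n+1$ vertices realizing the \emph{same} pair, and the tool that makes this possible is the $S$-suspension, for which Lemma \ref{S-suspension} controls exactly the two invariants we care about. So I would start with $(r,d) \in \mathcal{RD}(n)$ witnessed by a connected graph $G$ on $V(G) = \{x_1,\dots,x_n\}$ with ${\rm reg}(R/I(G)) = r$ and $\deg h_{R/I(G)}(t) = d$. After dealing with the case $n = 1$ directly, I may assume $n \geq 2$, so that $G$ has at least one edge and, being connected, has no isolated vertex.

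The key choice is the independent set used for the suspension. Since $\dim R/I(G)$ equals the maximum cardinality of an independent set of $G$ and this number $m := \dim R/I(G)$ satisfies $m \geq 1$, I would fix a maximum independent set $T$ of $G$ and set $S := T \setminus \{v\}$ for some $v \in T$; then $S$ is an independent set of $G$ with $|S| = m - 1 = \dim R/I(G) - 1$. Let $G' := G^{S}$ be the $S$-suspension on $\{x_1,\dots,x_n,x_{n+1}\}$, where $x_{n+1}$ is joined to every vertex of $V(G) \setminus S$, and put $R' = K[x_1,\dots,x_{n+1}]$. I would then check that $G'$ is connected: because $G$ has an edge and $S$ is independent, $V(G) \setminus S$ is nonempty, so $x_{n+1}$ has a neighbour in $G$, and connectedness of $G$ then forces connectedness of $G'$.

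Finally I would read off the invariants from Lemma \ref{S-suspension}. Part (1) applies because $G$ has no isolated vertex, giving ${\rm reg}(R'/I(G')) = {\rm reg}(R/I(G)) = r$; the second statement of part (2) applies because $|S| = \dim R/I(G) - 1$, giving $\deg h_{R'/I(G')}(t) = \deg h_{R/I(G)}(t) = d$. Hence $G'$ is a connected graph on $n+1$ vertices with $({\rm reg}(R'/I(G')), \deg h_{R'/I(G')}(t)) = (r,d)$, so $(r,d) \in \mathcal{RD}(n+1)$. I do not expect a genuine obstacle here: the argument is essentially a bookkeeping wrapper around Lemma \ref{S-suspension}, and the only points needing care are verifying its hypotheses — that $G$ has no isolated vertex and that an independent set of size exactly $\dim R/I(G) - 1$ exists — together with confirming that attaching the new vertex to $V(G) \setminus S$ keeps the graph connected.
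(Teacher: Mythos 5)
Your proof is correct and follows essentially the same route as the paper: take a maximum independent set $W$, set $S = W\setminus\{w\}$ so that $|S| = \dim R/I(G)-1$, form the $S$-suspension, and read off both invariants from Lemma \ref{S-suspension} (1) and (2). Your extra checks (that $G$ has no isolated vertex so part (1) applies, and that $G^{S}$ is connected) are details the paper's own proof leaves implicit, and your flagging of $n=1$ as a degenerate case needing separate treatment is a fair point, since a single vertex is isolated and the suspension argument does not apply verbatim there.
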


\begin{proof}
  Let $(r, d) \in \mathcal{RD}(n)$. Then there exists a
  connected graph $G$ with $n$ vertices such that
  ${\rm reg}(R/I(G)) = r$ and ${\rm deg} h_{R/I(G)}(t) = d$. Take an
  independent set $S$ of $G$ with $|S| = \dim R/I(G) - 1$.  This is possible
  since there is an independent set $W$ with $|W| = \dim R/I(G)$, so
  we can take $S = W\setminus \{w\}$ for any $w \in W$.  By virtue of Lemma \ref{S-suspension} (1) and (2), we have
  ${\rm reg}(R'/I(G^{S})) = r$ and ${\rm deg} h_{R'/I(G^{S})}(t) = d$. Since
  $G^S$ is a graph on $n+1$ vertices, we have $(r, d) \in \mathcal{RD}(n + 1)$.
 \end{proof}

\begin{lemma}\label{BasicLemma2}
Let $n_{1}, \ldots, n_{p} \geq 2$ be integers. 
Suppose that $(r_{i}, d_{i}) \in \mathcal{RD}(n_{i})$ for all $i = 1, \ldots, p$. 
Then $(r_{1} + \cdots + r_{p}, d_{1} + \cdots + d_{p}) \in \mathcal{RD}(n_{1} + \cdots + n_{p} + 1)$. 
\end{lemma}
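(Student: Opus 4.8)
The plan is to realize the disjoint union of graphs witnessing each pair $(r_i,d_i)$, and then glue these pieces together with a single new vertex using the $S$-suspension construction so that the result is connected while the regularity and $h$-polynomial degree add up correctly. Concretely, for each $i$ choose a connected graph $G_i$ on $n_i$ vertices with ${\rm reg}(K[V(G_i)]/I(G_i)) = r_i$ and $\deg h_{K[V(G_i)]/I(G_i)}(t) = d_i$; since $n_i \geq 2$ and $G_i$ is connected, $G_i$ has no isolated vertices. Let $H = G_1 \cup \cdots \cup G_p$ be the disjoint union, a graph on $N := n_1 + \cdots + n_p$ vertices with no isolated vertices. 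By Lemma \ref{disjoint_union}, ${\rm reg}(K[V(H)]/I(H)) = r_1 + \cdots + r_p$ and $\deg h_{K[V(H)]/I(H)}(t) = d_1 + \cdots + d_p$.

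Next I would pass to an $S$-suspension of $H$ that is connected. The key point is that for $H^S$ to be connected, $S$ must contain no entire connected component of $H$; equivalently $S$ meets the vertex set of each $G_i$ in a proper subset, so that the new vertex $x_{N+1}$ is adjacent to at least one vertex of each $G_i$ and hence links all the components. To invoke Lemma \ref{S-suspension}(1) and (2) with the conclusion $\deg h_{R'/I(H^S)}(t) = \deg h_{R/I(H)}(t)$, I need $S$ to be an independent set of $H$ with $|S| = \dim K[V(H)]/I(H) - 1$. Here $\dim K[V(H)]/I(H)$ equals the maximum size of an independent set of $H$, which is the sum $\alpha(G_1) + \cdots + \alpha(G_p)$ of the independence numbers of the components, so in particular $\dim K[V(H)]/I(H) = \sum_i \alpha(G_i) \geq p \geq 1$.

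The main obstacle — really the only subtlety — is choosing this $S$ simultaneously (a) independent, (b) of size exactly $\dim K[V(H)]/I(H) - 1$, and (c) missing at least one vertex from each component $G_i$. To arrange this, take a maximum independent set $W_i$ of each $G_i$, so $|W_i| = \alpha(G_i)$, and pick any $w_i \in W_i$; then pick any one index $j$ and set $S = \big(\bigcup_{i} W_i\big) \setminus \{w_j\}$. This $S$ is independent in $H$ (a union of independent sets taken across distinct components is independent), it has size $\sum_i \alpha(G_i) - 1 = \dim K[V(H)]/I(H) - 1$, and for the component $G_j$ the vertex $w_j$ lies in $V(G_j) \setminus S$, while for $i \neq j$ one checks $W_i \subsetneq V(G_i)$ unless $G_i$ has no edges — but $G_i$ is connected on $n_i \geq 2$ vertices, hence has an edge, so $\alpha(G_i) < n_i$ and $V(G_i) \setminus S \neq \emptyset$ there too. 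Thus $x_{N+1}$ is joined to a vertex of every $G_i$, so $H^S$ is connected on $N+1 = n_1 + \cdots + n_p + 1$ vertices. Applying Lemma \ref{S-suspension}(1) (valid since $H$ has no isolated vertices) and Lemma \ref{S-suspension}(2) (valid since $|S| = \dim K[V(H)]/I(H) - 1$) together with Lemma \ref{disjoint_union} gives ${\rm reg}(R'/I(H^S)) = r_1 + \cdots + r_p$ and $\deg h_{R'/I(H^S)}(t) = d_1 + \cdots + d_p$, so $(r_1 + \cdots + r_p, d_1 + \cdots + d_p) \in \mathcal{RD}(n_1 + \cdots + n_p + 1)$, as desired.
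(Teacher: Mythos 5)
Your proof is correct and follows the same route as the paper's: form the disjoint union, apply Lemma \ref{disjoint_union}, and then take an $S$-suspension with $|S| = \dim K[V(H)]/I(H) - 1$ to reconnect the graph while preserving both invariants via Lemma \ref{S-suspension}. Your extra care about connectivity of $H^S$ is a detail the paper leaves implicit; note it is automatic for \emph{any} independent $S$, since each $G_i$ is connected on $n_i \geq 2$ vertices and hence $V(G_i)$ is never independent, so $S$ necessarily misses a vertex of every component.
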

\begin{proof}
Let $G_i$ denote a  connected graph  with $n_{i}$ vertices such that 
${\rm reg}(K[V(G_{i})]/I(G_{i})) = r_{i}$ and ${\rm deg} h_{K[V(G_{i})]/I(G_{i})}(t) = d_{i}$ for all $i = 1, \ldots, p$. 
Let us consider the disjoint union $G = G_{1} \cup \cdots \cup G_{p}$. 
By virtue of Lemma \ref{disjoint_union}, one has ${\rm reg}(K[V(G)]/I(G)) = r_{1} + \cdots + r_{p}$ and ${\rm deg} h_{K[V(G)]/I(G)}(t) = d_{1} + \cdots +d_{p}$. 
Let $S \subset V(G)$ be an independent set of $G$
with $|S| = \dim K[V(G)]/I(G) - 1$. 
Then the $S$-suspension $G^{S}$ has $n_{1} + \cdots + n_{p} + 1$ vertices and 
${\rm reg}(K[V(G^{S})]/I(G^{S})) = r_{1} + \cdots + r_{p}$ and ${\rm deg} h_{K[V(G^{S})]/I(G^{S})}(t) = d_{1} + \cdots + d_{p}$ 
by Lemma \ref{S-suspension}. 
Hence we have the desired conclusion. 
\end{proof}

We now focus on the lattice points of $\mathcal{RD}(n)$. Our starting
point is the next lemma which identifies
some lattice points of this set.  To prove
this lemma, we require the following two graphs.  The ribbon
graph, denoted $G_{\rm ribbon}$, is the graph on five vertices
as given in Figure \ref{fig:G3}.
\begin{figure}[htbp]
\centering

\begin{xy}
	\ar@{} (0,0);(50,0) *{\text{$G_{\rm ribbon} =$}};
	\ar@{} (0,0);(90, 0)  *++!D{x_5} *\cir<2pt>{} = "A";
	\ar@{-} "A";(70, 8)   *++!D{x_1} *\cir<2pt>{} = "B";
	\ar@{-} "A";(70, -8)  *++!U{x_2} *\cir<2pt>{} = "C";
	\ar@{-} "A";(110, 8)  *++!D{x_3}  *\cir<2pt>{} = "D";
	\ar@{-} "A";(110, -8) *++!U{x_4} *\cir<2pt>{} = "E";
	\ar@{-} "B";"C";
	\ar@{-} "D";"E";
\end{xy}

\caption{The graph $G_{\rm{ribbon}}$}
\label{fig:G3}
\end{figure}
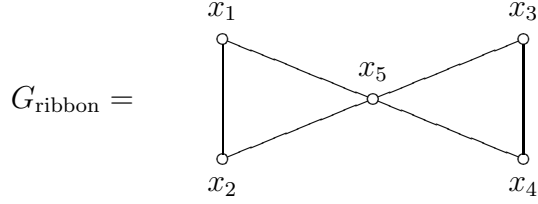
The regularity and the degree of the $h$-polynomial for
$K[V(G_{\rm ribbon})]/I(G_{\rm ribbon})$ 
are computed in \cite[Example 10]{HMVT} (or can be computed via a computer algebra system):
$${\rm reg}(K[V(G_{\rm ribbon})]/I(G_{\rm ribbon})) = 2
~\mbox{and}~~ \deg h_{K[V(G_{\rm ribbon})] / I(G_{\rm ribbon})}(t) = 1.$$ 

Our second family is $D_r$, where $D_r$ is a graph on $2r$ vertices
consisting of the disjoint union of $r$ paths of length $1$.
In this case $I(D_r)$ is a complete intersection since
$I(D_r) = \langle x_1x_2,x_3x_4,\ldots,x_{2r-1}x_{2r}\rangle$ is generated
by $r$ monomials which have pairwise disjoint support.  So, by
properties of complete intersections,
$$H_{K[V(D_r)]/I(D_r)}(t) =  \frac{(1+t)^r}{(1-t)^r},$$
  and consequently,
  $h_{K[V(D_r)]/I(D_r)}(t) = (1 + t)^{r}$  and $\dim K[V(D_r)]/I(D_r) = r$.
  Moreover, since the Koszul complex gives a minimal
  free resolution of $K[V(D_r)]/I(D_r)$, we have ${\rm reg}(K[V(D_r)]/I(D_r)) =r$.

\begin{lemma}\label{RD}
  Let $r \geq 1$, $d \geq 1$ be integers. 
  \begin{enumerate}
    \item Then $(r,1) \in \mathcal{RD}(2^r+r-1)$.
  \item If $r < d$, then $(r, d) \in \mathcal{RD}(r + d)$. 
  \item If $r \geq 2$, then $(r, d) \not\in \mathcal{RD}(2r)$. 
  In particular, if $(r, d) \in \mathcal{RD}(n)$, then $r \leq \lfloor \frac{n - 1}{2} \rfloor$. 
  \item If $r = d \geq 2$, then $(r, d) = (r,r) \in \mathcal{RD}(2r+1)$. 
  \item If $r = d+1$ and $r$ is even (resp.\  $r$ is odd), 
    then $(r,d) = (r, r - 1) \in \mathcal{RD}(2r + 1)$ 
    (resp.\  $(r,d) = (r, r - 1) \in \mathcal{RD}(2r + 2)$). 
  \item Let $c$ be an integer with $c \geq 1$.  
    If $r \geq d + 2$ and $cd < r \leq (c + 1)d$, then $(r, d)
    \in \mathcal{RD}\left( (2^{c} + 1)r - ((c - 1)2^{c} + 1)d + 1 \right)$. 
  \end{enumerate}
\end{lemma}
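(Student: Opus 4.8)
The proof is a collection of explicit constructions, one per part, all assembled from two base graphs --- $G_{\rm ribbon}$ (realizing $(2,1)$ on five vertices) and $D_r$, the complete intersection of $r$ disjoint edges (realizing $(r,r)$ on $2r$ vertices, with $h$-polynomial $(1+t)^r$) --- by three operations whose effect on the invariants is already computed in the lemmas above: $S$-suspension (Lemma \ref{S-suspension}: preserves ${\rm reg}$, shifts the Hilbert series by $t/(1-t)^{|S|+1}$, and preserves $\dim$ when $|S|\le\dim-1$), disjoint union (Lemma \ref{disjoint_union}: both ${\rm reg}$ and $\deg h$ are additive), and their combination (Lemma \ref{BasicLemma2}). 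Parts (2), (4), (5) are short; the technical heart is part (1), on which part (6) in turn depends.

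For part (1) the plan is to induct on $r$, the case $r=1$ being the single edge $K_2$, which realizes $(1,1)$ on $2=2^1+1-1$ vertices with $h$-polynomial $1+t$. For the step, suppose a connected graph $G_{r-1}$ on $2^{r-1}+r-2$ vertices has ${\rm reg}=r-1$, $\dim=r-1$, and $h$-polynomial $1+(2^{r-1}-1)t$. Then $G_{r-1}\sqcup K_2$ has ${\rm reg}=r$, $\dim=r$, and $h$-polynomial $1+2^{r-1}t+(2^{r-1}-1)t^2$; now perform $2^{r-1}-1$ successive $S$-suspensions, each time taking $S$ an independent set of size $r-2$. By Lemma \ref{S-suspension} each of these preserves ${\rm reg}=r$ and $\dim=r$ (no isolated vertex ever appears, and $r-2\le\dim-1$) and replaces the $h$-polynomial $h(t)$ by $h(t)+t(1-t)$, so after all of them the $h$-polynomial collapses to $1+(2^r-1)t$; the first suspension also reconnects the graph. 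Counting vertices gives $(2^{r-1}+r-2)+2+(2^{r-1}-1)=2^r+r-1$, so $(r,1)\in\mathcal{RD}(2^r+r-1)$. (For $r=2$ this reproduces $G_{\rm ribbon}=D_2^{\emptyset}$.) Part (6) then follows by bookkeeping: given $cd<r\le(c+1)d$, write $r=cd+s$ with $1\le s\le d$ and apply Lemma \ref{BasicLemma2} to $s$ copies of the part-(1) graph realizing $(c+1,1)$ together with $d-s$ copies realizing $(c,1)$; additivity gives regularity $s(c+1)+(d-s)c=r$ and $h$-degree $d$, and the vertex total $s(2^{c+1}+c)+(d-s)(2^{c}+c-1)+1$ simplifies to $(2^{c}+1)r-((c-1)2^{c}+1)d+1$.

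The remaining positive parts are direct. For (2) with $r<d$, take the tree consisting of a centre $c$ joined to vertices $m_1,\dots,m_r$, with one pendant $\ell_i$ on each $m_i$ for $i\ge 2$ and $d-r$ pendants on $m_1$: this is connected on $r+d$ vertices, the set $\{m_1\ell_1^{(1)},m_2\ell_2,\dots,m_r\ell_r\}$ is simultaneously an induced matching and a maximum matching, so ${\rm reg}={\rm im}={\rm m}=r$ by Lemma \ref{regbounds}, and a direct Hilbert-series computation shows $\dim=d$ with $h$-polynomial of degree $d$ (for $r=1$ this degenerates to the star $K_{1,d}$). For (4), take $D_r^{S}$ with $|S|=r-1=\dim R/I(D_r)-1$: by Lemma \ref{S-suspension} it is connected on $2r+1$ vertices with ${\rm reg}=r$ and $h$-polynomial $(1+t)^r+t$ of degree $r$. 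For (5) with $r$ even, take $D_r^{\emptyset}$: its $h$-polynomial is $(1+t)^r+t(1-t)^{r-1}$, whose degree-$r$ terms cancel while the coefficient of $t^{r-1}$ equals $2r-1\ne 0$, giving ${\rm reg}=r$, $\deg h=r-1$ on $2r+1$ vertices; for $r$ odd, feed $D_{r-1}^{\emptyset}$ (which realizes $(r-1,r-2)$ on $2r-1$ vertices by the even case) and $K_2$ into Lemma \ref{BasicLemma2} to obtain $(r,r-1)\in\mathcal{RD}(2r+2)$.

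Part (3) is the only negative statement. If $(r,d)\in\mathcal{RD}(2r)$ with $r\ge 2$, there is a connected $G$ on $2r$ vertices with ${\rm reg}(R/I(G))=r$; by Lemma \ref{regbounds}, ${\rm m}(G)=r$, so $G$ has a perfect matching, and since $G$ is connected with $r\ge 2$ some vertex $u$ has a neighbour outside its matching edge, so $|N[u]|\ge 3$. Plugging $G\setminus N[u]$ (at most $2r-3$ vertices) and $G\setminus u$ (at most $2r-1$ vertices) into the standard inductive bound ${\rm reg}(R/I(G))\le\max\{{\rm reg}(R/I(G\setminus N[u]))+1,\ {\rm reg}(R/I(G\setminus u))\}$ and applying Lemma \ref{regbounds} to those smaller graphs forces ${\rm reg}(R/I(G))\le r-1$, a contradiction; the ``in particular'' clause then follows from Lemma \ref{BasicLemma} together with ${\rm reg}\le\lfloor n/2\rfloor$. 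The main obstacle is part (1): verifying that the prescribed block of suspensions collapses the quadratic $h$-polynomial to exactly $1+(2^r-1)t$ and that the hypotheses of Lemma \ref{S-suspension} (no isolated vertex, $|S|\le\dim-1$, and connectivity) persist at every intermediate graph. Once this is in place, part (6) and its vertex-count identity are mechanical, and part (3) needs only the auxiliary regularity recursion.
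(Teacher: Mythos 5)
Your proof is correct, and while parts (4)--(6) coincide with the paper's constructions (the paper realizes $(r,r-1)$ for odd $r$ via $D_{r-1}^{\emptyset}\cup D_1$ plus one more suspension, which is the same graph your appeal to Lemma \ref{BasicLemma2} produces, and its part (6) uses exactly your multiset of $r-cd$ copies of $(c+1,1)$ and $(c+1)d-r$ copies of $(c,1)$), you take genuinely different routes on (1), (2) and (3). For (1) the paper simply cites \cite[Lemma 12]{HMVT}; your inductive reconstruction --- disjoint union with $K_2$ followed by $2^{r-1}-1$ suspensions over independent sets of size $r-2$, each adding $t(1-t)$ to the $h$-polynomial --- is a correct self-contained replacement: the arithmetic $1+2^{r-1}t+(2^{r-1}-1)t^2+(2^{r-1}-1)(t-t^2)=1+(2^r-1)t$ with $h(1)=2^r\neq 0$ checks out, the hypotheses of Lemma \ref{S-suspension} persist, and the multiplicativity of $h$-polynomials under disjoint union (which you need beyond the stated Lemma \ref{disjoint_union}) follows from the same tensor-product fact. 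For (2) the paper stays inside the suspension machinery, iteratively suspending $D_r$ over an independent set of size $r$ so that the top coefficient of the numerator is visibly $-1$ at every step; your spider tree is an equally valid witness, but the one assertion you leave unverified is precisely that its $h$-polynomial has degree exactly $d$. This does hold --- the independence polynomial of your tree is $x(1+x)^{d-1}+(1+2x)^{r-1}\bigl[(1+x)^{d-r}+x\bigr]$, which evaluates to $(-1)^{r}$ at $x=-1$, so $h_d=(-1)^{d+r}\neq 0$ --- but it is the only point where the degree could a priori drop, so it should be written out. For (3) the paper argues that $\mathrm{reg}=\mathrm{m}=r$ forces either $G=D_r$ (disconnected) or, by \cite[Theorem 11]{TNT}, $G$ a pentagon; your alternative via the recursion $\mathrm{reg}(R/I(G))\leq\max\{\mathrm{reg}(R/I(G\setminus N[u]))+1,\ \mathrm{reg}(R/I(G\setminus u))\}$ together with Lemma \ref{regbounds} applied to the two smaller graphs is sound and pleasantly avoids both the Cameron--Walker classification and Trung's theorem, at the cost of importing that recursion (a Dao--Huneke--Schweig/Hà-type lemma), which the paper never states and which you should therefore cite explicitly.
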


\begin{proof}  Statement (1) follows from \cite[Lemma 12]{HMVT}
  which constructs a connected graph $G$ on
  $2^r+r-1$ vertices that has ${\rm reg}(K[V(G)]/I(G)) =r$ and $\deg h_{K[V(G)]/I(G)}(t) =1$.

  To prove (2), let $D_r$ be the graph
  defined prior to this lemma.  Let $S_1$ be an independent
  set of $D_r$ with $|S_1| = r$ (for example, take one vertex from
  each path of length one).   The $S$-suspension
  graph $B_1 = D_r^{S_1}$ has $2r+1$ vertices,  and by Lemma
  \ref{S-suspension} (1) ${\rm reg}(K[V(B_1)]/I(B_1)) = r$
  and by Lemma \ref{S-suspension} (2)
  $$
  H_{K[V(B_1)]/I(B_1)}(t) =
  \frac{(1+t)^r}{(1-t)^r} + \frac{t}{(1-t)^{r+1}}
  = \frac{(1+t)^r(1-t) + t}{(1-t)^{r+1}}
  $$
  and so $\deg h_{K[V(B_1)]/I(B_1)}(t) = r + 1$.

  We now reiterate this process.  Let $S_i$ be the independent
  set of $B_{i-1}$ of size $r+{i-1}$ that contains the $r$ independent
  elements of $S_1$ and $y_1,\ldots,y_{i-1}$ where $y_j$ was the
  new vertex we added when we constructed $B_j = B_{j-1}^{S_{j-1}}$ by
  forming the $S$-suspension of $B_{j-1}$ with $S_{j-1}$.  Each set
  $S_i$ is independent because each new $y_j$ is only adjoined
  to the vertices not in $S_1$ in $D_r$.  By induction on $i$, Lemma
  \ref{S-suspension} implies that the graph $B_{i}$ satisfies
  ${\rm reg}(K[V(B_i)]/I(B_i)) = r$ and
  $\deg h_{K[V(B_i)]/I(B_i)} = r+i$.   It then follows that
  $B_{d-r}$ has $2r+d-r = r+d$ vertices, ${\rm reg}(K[V(B_{d-r})]/I(B_{d-r})) = r$
  and  $\deg h_{K[V(B_{d-r})]/I(B_{d-r})}(t) = d$.  So $(r,d) \in \mathcal{RD}(r+d)$. 
  
  For the proof of $(3)$, we assume that $(r,d) \in \mathcal{RD}(2r)$. Then there exists a connected simple graph $G$ with 
  ${\rm reg}(K[V(G)]/I(G)) = r \geq 2$ and $|V(G)| = 2r$.
  By Lemma \ref{regbounds}, we have
  $r = {\rm reg}(K[V(G)]/I(G)) \leq  {\rm m}(G) \leq \lfloor \frac{2r}{2} \rfloor$, that is, ${\rm reg}(K[V(G)]/I(G)) = {\rm m}(G) = r$. 
  If ${\rm im}(G) = r$, then $G = D_{r}$, a contradiction for the connectivity of $G$. 
  Hence ${\rm im}(G) < {\rm reg}(K[V(G)]/I(G)) = {\rm m}(G)$. Then \cite[Theorem 11]{TNT} says that $G$ is a pentagon, 
  but this is a contradiction. Thus $(r,d) \not\in \mathcal{RD}(2r)$. 

  For the proof of $(4)$, again consider the graph $D_r$, and
  let $S$ be an independent
  set with $|S| = r-1 = \dim K[V(D_r)]/I(D_r) -1$.  Then by Lemma \ref{S-suspension}
  (1) and (2),
  the ring $K[V(D_r^S)]/I(D_r^S)$ has regularity $r$ and
  $\deg h_{K[V(D_r^S)]/I(D_r^S)}(t) = \deg h_{K[V(D_r)]/I(D_r)]}(t) = r$.
  Since $D_r^S$ has $2r+1$ vertices, $(r,r) \in \mathcal{RD}(2r+1)$. 

  To prove (5), first assume that $r$ is even. Let $D_r$
  be as above, and consider the $S$-suspension with $S = \emptyset$.
  By Lemma \ref{S-suspension} (1), the regularity of
  $K[V(D_r^\emptyset)]/I(D_r^\emptyset)$ equals $r$, while
  $$H_{K[V(D_r^\emptyset)]/I(D_r^\emptyset)}(t) =
  H_{R/I(D_r)}(t)+ \frac{t}{1-t} =  \frac{(1+t)^r}{(1-t)^r} + \frac{t}{1-t}
  = \frac{(1+t)^r+t(1-t)^{r-1}}{(1-t)^r}.$$
   Because $r$ is even, when we simplify the $h$-polynomial we find
   $\deg h_{K[D_r^\emptyset]/I(D_r^\emptyset)}(t) = r-1$.  So
   $(r,r-1) \in \mathcal{RD}(2r+1)$.
   
    If we instead assume that $r$ is odd, 
    consider the graph $G$ which is the disjoint union of
    $D_{r-1}^\emptyset$ and $D_1$.
    Then $|V(G)|=2r+1$, ${\rm reg}(K[V(G)]/I(G)) = \dim K[V(G)]/I(G) = r$, 
    and $\deg h_{K[V(G)]/I(G)}(t) = r-1$. 
    Let $S$ be an independent set of $G$ with 
    $|S| = r-1$.  By  Lemma \ref{S-suspension}
    the $S$-suspension of $G$
    creates a graph 
    with $(r, r - 1) \in \mathcal{RD}(2r + 2)$. 
    
    Finally, we give a proof of (6).
    We set $i= r-d (\geq 2)$. 
    Note that
    $$(r, r - i) = (cr - (c + 1)i) \cdot (c, 1) + (ci - (c - 1)r) \cdot (c + 1, 1).$$ 
    By virtue of (1), one has $(c, 1) \in \mathcal{RD}(2^{c} + c - 1)$ and $(c + 1, 1) \in \mathcal{RD}(2^{c + 1} + c)$. 
    Then, since $i = r - d$, it follows that 
    \begin{eqnarray*}
    (r,d) = (r, r - i) &\in& \mathcal{RD}\left( (cr - (c + 1)i)(2^{c} + c - 1) + (ci - (c - 1)r)(2^{c + 1} + c) + 1 \right) \\
    &=&  \mathcal{RD}\left( (2^{c} + 1)r - ((c - 1)2^{c} + 1)d + 1 \right)
    \end{eqnarray*}
    by virtue of Lemma \ref{BasicLemma2}. 
    We now have the desired conclusion. 
\end{proof}

By virtue of Lemmas \ref{BasicLemma} and \ref{RD}, we have the following theorem.
Recall that  if $(r,d) \in \mathcal{RD} (n)$, 
then $r \leq \lfloor \frac{n - 1}{2} \rfloor$ by Lemma \ref{RD} (3) and
$r + d \leq n$ by \eqref{generalbound}.

\begin{theorem}\label{LatticePointsRD}
  Let $r \geq 1$, $d \geq 1$, and $n \geq 3$ be integers. 
  Assume that $r \leq \lfloor \frac{n - 1}{2} \rfloor$ and $r + d \leq n$.  Then 
  \begin{enumerate}
  \item If $r < d$, then $(r, d) \in \mathcal{RD}(n)$. 
  \item If $r=d \geq 2$ and $r+d = r+r < n$, then $(r,d) = (r, r) \in \mathcal{RD} (n)$. 
  \item If $r = d + 1$ and $r < \lfloor \frac{n-1}{2} \rfloor$, then $(r, d) = (r, r - 1) \in \mathcal{RD} (n)$. 
  \end{enumerate}
\end{theorem}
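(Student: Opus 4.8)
The plan is to prove each of the three items by combining the lattice-point facts from Lemma \ref{RD} with the ``upward'' closure Lemma \ref{BasicLemma}, which guarantees $\mathcal{RD}(m) \subseteq \mathcal{RD}(n)$ for all $m \leq n$. In every case the strategy is: produce a \emph{small} value of $n$ for which $(r,d) \in \mathcal{RD}(n)$ using Lemma \ref{RD}, then use Lemma \ref{BasicLemma} repeatedly to push this membership up to the prescribed $n$. So the work amounts to checking, under the hypotheses $r \leq \lfloor \frac{n-1}{2}\rfloor$ and $r+d \leq n$, that the ``witness value'' supplied by Lemma \ref{RD} is at most $n$.

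\medskip
\noindent \textbf{Item (1).} Here $r < d$. By Lemma \ref{RD}(2) we have $(r,d) \in \mathcal{RD}(r+d)$. The hypothesis $r + d \leq n$ together with Lemma \ref{BasicLemma} then gives $(r,d) \in \mathcal{RD}(n)$ immediately. This case is essentially free.

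\medskip
\noindent \textbf{Item (2).} Here $r = d \geq 2$ and $2r < n$, i.e. $2r+1 \leq n$. By Lemma \ref{RD}(4) we have $(r,r) \in \mathcal{RD}(2r+1)$, and since $2r+1 \leq n$, Lemma \ref{BasicLemma} yields $(r,r) \in \mathcal{RD}(n)$. Again routine; the only subtlety is that the strict inequality $r+d<n$ in the statement is exactly what is needed to guarantee $2r+1 \leq n$.

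\medskip
\noindent \textbf{Item (3).} Here $r = d+1$ and $r < \lfloor \frac{n-1}{2}\rfloor$, so in particular $\lfloor \frac{n-1}{2}\rfloor \geq r+1$, which gives $n \geq 2r+3$. Lemma \ref{RD}(5) supplies $(r,r-1) \in \mathcal{RD}(2r+1)$ when $r$ is even and $(r,r-1) \in \mathcal{RD}(2r+2)$ when $r$ is odd. In either case the witness value is at most $2r+2 \leq n$, so Lemma \ref{BasicLemma} finishes the argument. The main point to verify is that $r < \lfloor \frac{n-1}{2}\rfloor$ forces $n \geq 2r+2$ (indeed $n \geq 2r+3$), so that even the weaker, odd-$r$ bound $2r+2$ does not exceed $n$.

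\medskip
\noindent There is no serious obstacle: the proof is a bookkeeping exercise translating the hypotheses $r \leq \lfloor \frac{n-1}{2}\rfloor$ and $r+d \leq n$ (or the strengthened strict versions in items (2) and (3)) into the inequality ``witness value $\leq n$.'' The only place where one must be a little careful is item (3), where the parity of $r$ changes which of Lemma \ref{RD}(5)'s two conclusions applies; taking the larger witness $2r+2$ uniformly and checking $r < \lfloor\frac{n-1}{2}\rfloor \Rightarrow n \geq 2r+2$ handles both parities at once.
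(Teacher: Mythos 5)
Your proposal is correct and follows essentially the same route as the paper: in each case you invoke the relevant part of Lemma \ref{RD} to place $(r,d)$ in $\mathcal{RD}(m)$ for a small witness $m$, then use Lemma \ref{BasicLemma} to conclude $(r,d)\in\mathcal{RD}(n)$ once the hypotheses guarantee $m\leq n$. Your bookkeeping in item (3) (deriving $n\geq 2r+3$, hence $2r+2\leq n$, so both parity cases of Lemma \ref{RD}(5) are covered) matches the paper's argument.
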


\begin{proof}
  For the proof of (1), assume that $r < d$. Since $r + d \leq n$, we have 
  $(r, d) \in \mathcal{RD} (r + d) \subseteq \mathcal{RD} (n)$ by virtue of Lemmas \ref{BasicLemma} and \ref{RD} (2). 
   Statement (2) follows from Lemmas \ref{BasicLemma} and \ref{RD}(4).

   For statement $(3)$, since $r < \frac{n-1}{2}$, we have $2r+1 < n$,
  or equivalently, $2r+2 \leq n$.  If $r = d+1$, then by
  Lemma \ref{RD} (5),
  we have $(r,r-1) \in \mathcal{RD}(2r+1)$ or $\mathcal{RD}(2r+2)$,
  depending upon the parity of $r$.  The result now follows
  from Lemma \ref{BasicLemma} since $2r+1 < 2r+2 \leq n$.
\end{proof}

For a positive integer $n$, we define 
\begin{eqnarray*}
\displaystyle A(n) &=& \left\{ (r, d) \ \middle| \ 1 \leq r < \left\lfloor \frac{n - 1}{2} \right\rfloor, \, 1 \leq d \leq n - r, \, r - d \leq 1 \right\}, \\
\displaystyle B(n) &=& \left\{ (r, d) \ \middle| \ 1 \leq r \leq \left\lfloor \frac{n - 1}{2} \right\rfloor, \, 1 \leq d \leq n - r \right\}. 
\end{eqnarray*}  
Both $A(n)$ and $B(n)$ are the integer points of a convex lattice polytopes. 
The following theorem is one of our main theorem, and it follows
directly from Theorem \ref{LatticePointsRD}.

\begin{theorem}\label{maintheorem}
Let $n \geq 3$ be an integer. 
Let $A(n)$ and $B(n)$ be sets of integer points as above. Then 
\[
A(n) \subseteq  \mathcal{RD}(n) \subseteq B(n). 
\]
\end{theorem}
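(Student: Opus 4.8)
The plan is to deduce Theorem~\ref{maintheorem} directly from Theorem~\ref{LatticePointsRD}, which already packages all the constructive work. The two inclusions are handled separately, and neither should require new ideas beyond bookkeeping.

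\textbf{The inclusion $\mathcal{RD}(n) \subseteq B(n)$.} Take $(r,d) \in \mathcal{RD}(n)$, so there is a connected graph $G$ on $n$ vertices with ${\rm reg}(R/I(G)) = r$ and $\deg h_{R/I(G)}(t) = d$. Since $(r,d)$ arises from an actual edge ideal, $r \geq 1$ (as $G$ is connected on $n \geq 3$ vertices it has at least one edge, so the regularity is at least $1$) and $d \geq 1$ (the $h$-polynomial is nonzero with $h_0 = 1$, and one checks $\deg h \geq 1$; alternatively this is subsumed by $1 \le d$ being built into $B(n)$ as a defining inequality that we must verify — here I would just note $h_{R/I(G)}(t) \ne $ constant because $I(G) \ne 0$, forcing $d\ge 1$). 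The bound $r \leq \lfloor \frac{n-1}{2} \rfloor$ is exactly Lemma~\ref{RD}(3), and $r + d \leq n$, i.e. $d \leq n - r$, is precisely \eqref{generalbound} from \cite[Theorem~13]{HMVT}. These are the three defining inequalities of $B(n)$, so $(r,d) \in B(n)$.

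\textbf{The inclusion $A(n) \subseteq \mathcal{RD}(n)$.} Take $(r,d) \in A(n)$, so $1 \leq r < \lfloor \frac{n-1}{2}\rfloor$, $1 \leq d \leq n-r$, and $r - d \leq 1$. The last condition means $d \geq r-1$, so exactly one of three cases occurs: $d > r$, $d = r$, or $d = r-1$. In each case I invoke the matching clause of Theorem~\ref{LatticePointsRD}, after checking its hypotheses (which hold because $r < \lfloor\frac{n-1}{2}\rfloor$ is even stronger than the $r \le \lfloor\frac{n-1}{2}\rfloor$ demanded there, and $d \le n-r$ gives $r+d\le n$). If $d > r$: apply part (1). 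If $d = r$: then $r+d = 2r$, and since $r < \lfloor\frac{n-1}{2}\rfloor \le \frac{n-1}{2}$ we get $2r < n-1 < n$, so $r+d < n$ and part (2) applies (note $r \geq 2$ is needed for (2); if $r = 1$ and $d=1$ then $(1,1)$ is handled as a $d = r-1$-type degenerate case or directly, since a single edge already realizes $(1,1) \in \mathcal{RD}(2) \subseteq \mathcal{RD}(n)$ — I would dispatch the $r=1$ corner separately at the start). If $d = r - 1$: part (3) applies directly, its hypothesis $r < \lfloor\frac{n-1}{2}\rfloor$ being exactly what $A(n)$ provides. In all cases $(r,d) \in \mathcal{RD}(n)$.

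The only genuine subtlety — and the one place I would be careful — is the low-regularity boundary. When $r = 1$, the definition of $A(n)$ allows $d \in \{1,2\}$ (since $d \ge r-1 = 0$ but also $d \ge 1$), and Theorem~\ref{LatticePointsRD}(2) explicitly requires $r \ge 2$, so part (2) cannot be quoted for $(1,1)$; similarly the ``$r-d=1$'' case would be $(1,0)$, which is excluded by $d \ge 1$. So $r=1$ forces $d \in \{1,2\}$, handled by: $(1,1)\in\mathcal{RD}(2)$ from a single edge and $(1,2)\in\mathcal{RD}(3)$ from part (1) of Theorem~\ref{LatticePointsRD} (or the path $P_3$), then Lemma~\ref{BasicLemma}. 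Everything else is routine inequality-chasing, so I expect no real obstacle; the proof is essentially one paragraph of case analysis once the corner cases are isolated.
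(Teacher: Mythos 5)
Your proof is correct and follows the same route as the paper: the inclusion $\mathcal{RD}(n)\subseteq B(n)$ from Lemma~\ref{RD}~(3) together with \eqref{generalbound}, and $A(n)\subseteq\mathcal{RD}(n)$ from the three clauses of Theorem~\ref{LatticePointsRD} with the single exceptional point $(1,1)$ realized by one edge ($D_1$) and lifted via Lemma~\ref{BasicLemma}, exactly as the authors do. (Your closing remark that $r=1$ forces $d\in\{1,2\}$ misreads the constraint $r-d\le 1$, which is a lower bound on $d$, but this is harmless since your earlier case split already covers all $d$ with $r=1$.)
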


\begin{proof}
  For all $(r,d) \in A(n)$ except $(r,d) = (1,1)$, the first inclusion
  follows from Theorem \ref{LatticePointsRD}.  For $(1,1)$,
  note that the graph $D_1$ has ${\rm reg}(R/I(D_1)) = \deg h_{R/I(D_1)}(t) =1$. 
  So by Lemma \ref{BasicLemma}, one has $(1,1) \in \mathcal{RD}(n)$.
  The second inclusion follows from Lemma \ref{RD} (3) and
  \eqref{generalbound}. 
  \end{proof}

We end this section with a question inspired
by our results and computer experiments.

\begin{question}
  For all $n \geq 1$, is the set $\mathcal{RD}(n)$ a convex set?
  That is, if $(r,d)$ and $(r,d')$ with $d<d'$,
  respectively $(r',d)$ with $r<r'$,
  are in $\mathcal{RD}(n)$,
  is $(r,s) \in \mathcal{RD}(n)$ for all $d < s < d'$, respectively
  is $(s,d) \in \mathcal{RD}(n)$ for all $r < s < r'$?
\end{question}


\section{Cameron-Walker graphs: relevant properties}

For the remainder of this paper we will focus on describing
all possible pairs $(r,d) = ({\rm reg}(R/I(G)),\deg h_{R/I(G)}(t))$
when $G$ is a Cameron-Walker graph.  Towards this end, we
introduce the following subset of $\mathcal{RD}(n)$:
\[
CW_\mathcal{RD}(n)  = \left\{(r,d) ~\left|~
\begin{array}{c}
  \mbox{there exists a Cameron-Walker graph $G$ with} \\
  \mbox{$|V(G)| =n$ and $(r,d) = ({\rm reg}(R/I(G)),\deg h_{R/I(G)}(t))$}
\end{array}
\right \}\right..
\]
In this section we review the relevant background on Cameron-Walker
graphs so that in the next section we can completely
describe $CW_\mathcal{RD}(n)$ for all $n \geq 1$.

Recall from Lemma \ref{regbounds} the following inequalities:
\[
{\rm im}(G) \leq {\rm reg} \left(K[V(G)]/I(G)\right) \leq {\rm m}(G).
\]
By virtue of \cite[Theorem 1]{CW} together with \cite[Remark 0.1]{HHKO}, 
we have that the equality ${\rm im}(G) = {\rm m}(G)$ holds 
if and only if $G$ is one of the following graphs: 
\begin{itemize}
\item a star graph, i.e., a graph joining some paths of length
  $1$ at one common vertex (see Figure \ref{fig:Star}); 
\item a star triangle, i.e., a graph joining some triangles at
  one common vertex (see Figure \ref{fig:Star}); or
\item a finite graph consisting of a connected
  bipartite graph with vertex partition
  $\{v_{1}, \ldots, v_{m}\} \cup \{w_{1}, \ldots, w_{p}\}$
  such that there is at least one leaf edge
  attached to each vertex $v_{i}$ and that there may be possibly
  some pendant triangles attached to each vertex $w_{j}$;
  see Figure \ref{fig:CameronWalkerGraph} where $s_{i} \geq 1$
  for all $i = 1, \ldots, m$ and $t_{j} \geq 0$ for all $j = 1, \ldots, p$. 
  Note that a leaf edge is an edge meeting a vertex of degree $1$ and a
  pendant triangle is a triangle where two vertices have degree $2$ and the
  remaining vertex has degree more than $2$. 
\end{itemize} 

\begin{figure}[htbp]
  \centering
\bigskip

\begin{xy}
	\ar@{} (0,0);(55, -16)   *\cir<4pt>{} = "C"
	\ar@{-} "C";(35, 0)  *\cir<4pt>{} = "D";
	\ar@{-} "C";(45, 0)  *\cir<4pt>{} = "E";
	\ar@{} "C"; (55, 0) *++!U{\cdots}
	\ar@{-} "C";(75, 0)  *\cir<4pt>{} = "F";
	\ar@{} (0,0);(105, 0)  *\cir<4pt>{} = "C2"
	\ar@{-} "C2";(85, -12)  *\cir<4pt>{} = "D2";
	\ar@{-} "C2";(89, -16)  *\cir<4pt>{} = "E2";
	\ar@{-} "D2";"E2";
	\ar@{} "C2"; (105, -5) *++!U{\cdots}
	\ar@{-} "C2";(125, -12)  *\cir<4pt>{} = "F2";
	\ar@{-} "C2";(121, -16)  *\cir<4pt>{} = "G2";
	\ar@{-} "F2";"G2"; 
\end{xy}

\bigskip

\caption{The star graph (left) and the star triangle (right)}
\label{fig:Star}
\end{figure}
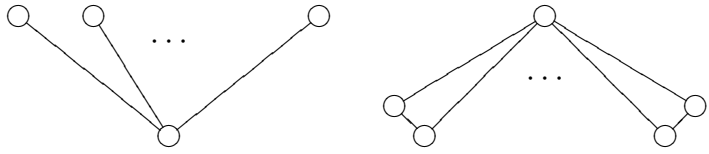

\begin{definition}
A finite connected simple graph $G$ is {\em a Cameron-Walker graph} 
if ${\rm im}(G) = {\rm m}(G)$ and if $G$ is neither a star
graph nor a star triangle. 
\end{definition}

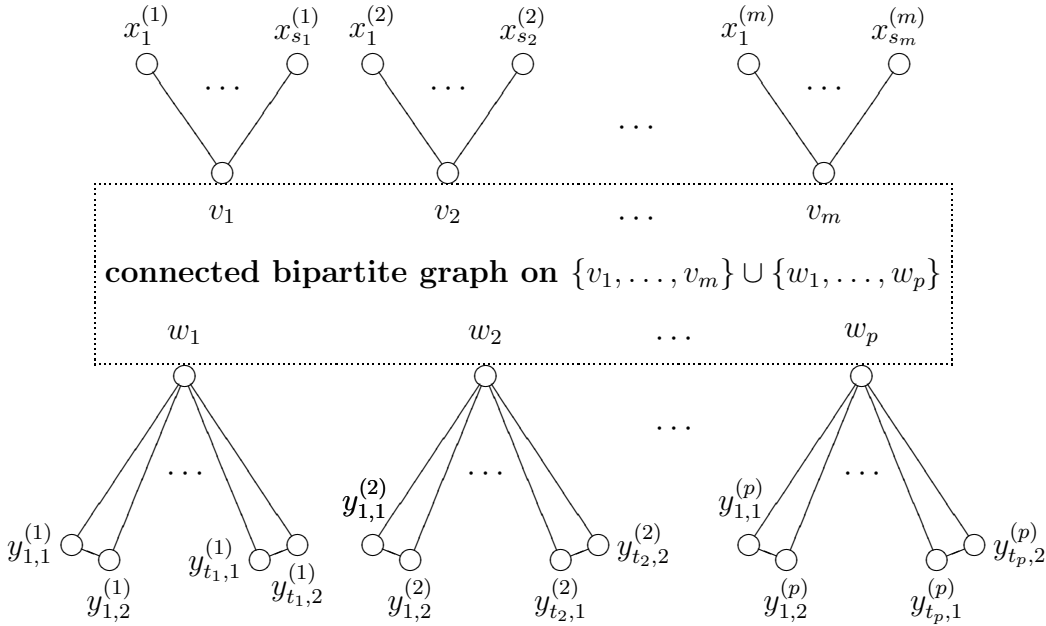
\begin{figure}[htbp]
  \centering
\begin{xy}
	\ar@{} (0,0);(18, 12)  = "A";
	\ar@{.} "A";(132, 12)  = "B";
	\ar@{.} "A";(18, -12)  = "C";
	\ar@{.} "B";(132, -12)  = "D";
	\ar@{.} "C";"D";
	\ar@{} (0,0);(75,0) *{\text{\bf{connected bipartite graph on} $\{v_{1}, \ldots, v_{m}\} \cup \{w_{1}, \ldots, w_{p}\}$}};
	\ar@{} "A";(35, 13.5)  *\cir<4pt>{} = "E1";
	\ar@{-} "E1";(25, 28) *++!D{x^{(1)}_{1}} *\cir<4pt>{};
	\ar@{-} "E1";(45, 28) *++!D{x^{(1)}_{s_{1}}} *\cir<4pt>{};
	\ar@{} (0,0); (35, 28) *++!U{\cdots}
	\ar@{} "A";(65, 13.5)  *\cir<4pt>{} = "E2";
	\ar@{-} "E2";(55, 28) *++!D{x^{(2)}_{1}} *\cir<4pt>{};
	\ar@{-} "E2";(75, 28) *++!D{x^{(2)}_{s_{2}}} *\cir<4pt>{};
	\ar@{} (0,0); (65, 28) *++!U{\cdots}
	\ar@{} "A";(115, 13.5)  *\cir<4pt>{} = "E";
	\ar@{-} "E";(105, 28) *++!D{x^{(m)}_{1}} *\cir<4pt>{};
	\ar@{-} "E";(125, 28) *++!D{x^{(m)}_{s_{m}}} *\cir<4pt>{};
	\ar@{} (0,0); (115, 28) *++!U{\cdots}
	\ar@{} (0,0);(35,8) *{\text{$v_{1}$}};
	\ar@{} (0,0);(65,8) *{\text{$v_{2}$}};
	\ar@{} (0,0);(90,4) *++!D{\cdots};
        \ar@{} (0,0);(115,8) *{\text{$v_{m}$}};
	\ar@{} (0,0);(90,16) *++!D{\cdots};
	\ar@{} "E1";(30, -13.5)  *\cir<4pt>{} = "F10";
	\ar@{-} "F10";(15, -36) *++!R{y^{(1)}_{1,1}} *\cir<4pt>{} = "F11";
	\ar@{-} "F10";(20, -38) *++!U{y^{(1)}_{1,2}} *\cir<4pt>{} = "F12";
	\ar@{-} "F11";"F12";
	\ar@{-} "F10";(40, -38) *++!R{y^{(1)}_{t_{1},1}} *\cir<4pt>{} = "Ft1";
	\ar@{-} "F10";(45, -36) *++!U{y^{(1)}_{t_{1},2}} *\cir<4pt>{} = "Ft2";
	\ar@{-} "Ft1";"Ft2";
	\ar@{} "E1";(70, -13.5)  *\cir<4pt>{} = "F20";
	\ar@{-} "F20";(55, -36) *\cir<4pt>{} = "F21";
	\ar@{} (0,0);(54,-30) *{\text{$y^{(2)}_{1,1}$}};
	\ar@{-} "F20";(60, -38) *++!U{y^{(2)}_{1,2}} *\cir<4pt>{} = "F22";
	\ar@{-} "F21";"F22";
	\ar@{-} "F20";(80, -38) *++!U{y^{(2)}_{t_{2},1}} *\cir<4pt>{} = "Ftt1";
	\ar@{-} "F20";(85, -36) *++!L{y^{(2)}_{t_{2},2}} *\cir<4pt>{} = "Ftt2";
	\ar@{-} "Ftt1";"Ftt2";
	\ar@{} "E1";(120, -13.5)  *\cir<4pt>{} = "Fn0";
	\ar@{-} "Fn0";(105, -36) *\cir<4pt>{} = "Fn1";
	\ar@{} (0,0);(104,-30) *{\text{$y^{(p)}_{1,1}$}};
	\ar@{-} "Fn0";(110, -38) *++!U{y^{(p)}_{1,2}} *\cir<4pt>{} = "Fn2";
	\ar@{-} "Fn1";"Fn2";
	\ar@{-} "Fn0";(130, -38) *++!U{y^{(p)}_{t_{p},1}} *\cir<4pt>{} = "Fnn1";
	\ar@{} (0,0);(54,-30) *{\text{$y^{(2)}_{1,1}$}};
	\ar@{-} "Fn0";(135, -36) *++!L{y^{(p)}_{t_{p},2}} *\cir<4pt>{} = "Fnn2";
	\ar@{-} "Fnn1";"Fnn2";
	\ar@{} (0,0);(30,-8) *{\text{$w_{1}$}};
	\ar@{} (0,0);(70,-8) *{\text{$w_{2}$}};
	\ar@{} (0,0);(95,-12) *++!D{\cdots};
	\ar@{} (0,0);(120,-8) *{\text{$w_{p}$}};
	\ar@{} (0,0);(95,-24) *++!D{\cdots};
	\ar@{} (0,0);(30,-30) *++!D{\cdots};
	\ar@{} (0,0);(70,-30) *++!D{\cdots};
	\ar@{} (0,0);(120,-30) *++!D{\cdots};
\end{xy}

\bigskip

  \caption{Cameron-Walker graph}
  \label{fig:CameronWalkerGraph}
\end{figure}

Some invariants of Cameron-Walker graphs were computed in \cite{HKMT}:

\begin{theorem}\label{CWformulas}
  Let $G$ be a Cameron-Walker graph with notation
  as in Figure \ref{fig:CameronWalkerGraph}. 
  Then  
  \begin{enumerate}
    \item $\displaystyle |V(G)| = m + p + \sum_{i=1}^{m} s_{i} + 2 \sum_{j=1}^{p} t_{j}$;
    \item $\displaystyle \deg h_{R/I(G)}(t) 
      = \dim R/I(G) = \sum_{i=1}^m s_i + \sum_{j=1}^{p} \max\{t_j,1\}$; and
    \item $\displaystyle {\rm reg}(R/I(G)) = m + \sum_{j=1}^{p} t_j$. 
\end{enumerate}
\end{theorem}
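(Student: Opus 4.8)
The plan is to read all three formulas directly off the combinatorial description of $G$ in Figure \ref{fig:CameronWalkerGraph}, treating the three items in turn. Item (1) is bookkeeping: $V(G)$ is the disjoint union of $\{v_1,\dots,v_m\}$, $\{w_1,\dots,w_p\}$, the leaf sets $\{x^{(i)}_1,\dots,x^{(i)}_{s_i}\}$ for $i=1,\dots,m$, and the triangle-vertex sets $\{y^{(j)}_{k,\ell}: 1\le k\le t_j,\ \ell\in\{1,2\}\}$ for $j=1,\dots,p$; these families are pairwise disjoint, so $|V(G)|=m+p+\sum_i s_i+2\sum_j t_j$.

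For the displayed-dimension part of (2), recall $\dim R/I(G)$ equals the independence number $\alpha(G)$. I would partition $V(G)$ into the blocks $A_i=\{v_i\}\cup\{x^{(i)}_1,\dots,x^{(i)}_{s_i}\}$ and $B_j=\{w_j\}\cup\{y^{(j)}_{k,\ell}\}$, and show that any independent set $W$ satisfies $|W\cap A_i|\le s_i$ (if $v_i\in W$ then no leaf of $v_i$ lies in $W$, so $|W\cap A_i|=1\le s_i$ using $s_i\ge1$; otherwise $|W\cap A_i|\le|A_i|-1=s_i$) and $|W\cap B_j|\le\max\{t_j,1\}$ (if $w_j\in W$ then $|W\cap B_j|=1$; otherwise $W$ meets each pendant triangle in at most one vertex). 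Summing gives $\alpha(G)\le\sum_i s_i+\sum_j\max\{t_j,1\}$, and equality is attained by the independent set consisting of all the leaves $x^{(i)}_k$ together with, for each $j$, either $w_j$ (when $t_j=0$) or one vertex of each triangle at $w_j$ (when $t_j\ge1$) --- the only edges to avoid are the bipartite edges $\{v_i,w_j\}$, and no $v_i$ was chosen. Hence $\dim R/I(G)=\sum_i s_i+\sum_j\max\{t_j,1\}$.

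The substantive half of (2), and the step I expect to be the main obstacle, is the equality $\deg h_{R/I(G)}(t)=\dim R/I(G)$. Let $\Delta={\rm Ind}(G)$ be the independence complex, so $R/I(G)=K[\Delta]$ and $d:=\dim K[\Delta]=\alpha(G)$. From $\sum_{i=0}^{d}h_i t^i=\sum_{i=0}^{d}f_{i-1}t^i(1-t)^{d-i}$ one reads off that the $h$-polynomial has degree $\le d$ and that its coefficient of $t^d$ is $(-1)^{d-1}\widetilde{\chi}(\Delta)$; so it suffices to prove $\widetilde{\chi}({\rm Ind}(G))\ne0$, which I would do by pinning down the homotopy type of ${\rm Ind}(G)$. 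Using the standard equivalence ${\rm Ind}(G)\simeq\Sigma\,{\rm Ind}(G\setminus N[v])$ valid whenever $x$ is a leaf of $G$ with neighbour $v$ (which follows from the decomposition ${\rm Ind}(G)=\big(x\ast{\rm Ind}(G\setminus N[x])\big)\cup{\rm Ind}(G\setminus x)$ into a cone and a subcomplex, after rewriting ${\rm Ind}(G\setminus x)$ in the same way through $v$), and iterating it at the leaves $x^{(1)}_1,\dots,x^{(m)}_1$ --- legitimate because the $v_i$ are pairwise non-adjacent and each $x^{(i)}_1$ is private to $v_i$, so $x^{(i)}_1$ is still a leaf of $G\setminus(N[v_1]\cup\dots\cup N[v_{i-1}])$ with neighbour $v_i$ --- one obtains ${\rm Ind}(G)\simeq\Sigma^{m}\,{\rm Ind}\!\big(G\setminus\bigcup_{i=1}^{m}N[v_i]\big)$. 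Since the bipartite part is connected, $\bigcup_i N[v_i]$ absorbs every $v_i$, every leaf, and every $w_j$, so the residual graph is the disjoint union of the $\sum_j t_j$ triangle-base edges $\{y^{(j)}_{k,1},y^{(j)}_{k,2}\}$, whose independence complex is $S^{\sum_j t_j-1}$. Thus ${\rm Ind}(G)\simeq S^{m+\sum_j t_j-1}$ and $\widetilde{\chi}({\rm Ind}(G))=(-1)^{m+\sum_j t_j-1}\ne0$, giving $\deg h_{R/I(G)}(t)=d=\dim R/I(G)$. The care is all in justifying the leaf-suspension lemma and its iteration and in handling the degenerate cases (no triangles present; and the fact that $G$ being neither a star graph nor a star triangle forces $m+\sum_j t_j\ge2$, which also makes the iterated suspension genuinely a positive-dimensional sphere).

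Finally, for (3): since $G$ is a Cameron--Walker graph we have ${\rm im}(G)={\rm m}(G)$, and Lemma \ref{regbounds} then pins ${\rm reg}(R/I(G))={\rm im}(G)={\rm m}(G)$, so it remains only to compute this common value as $m+\sum_j t_j$. For the lower bound, the $m$ leaf edges $\{v_i,x^{(i)}_1\}$ together with the $\sum_j t_j$ triangle-base edges $\{y^{(j)}_{k,1},y^{(j)}_{k,2}\}$ form an induced matching: they are pairwise disjoint, any edge of $G$ meeting a leaf edge does so at $v_i$ (the $x^{(i)}_1$ have degree $1$) while every $v_i$ is non-adjacent to the other chosen vertices, and any edge of $G$ meeting a triangle-base edge lies inside that same triangle. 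For the upper bound, every edge of $G$ either meets some $v_i$ (the leaf edges and the bipartite edges) or lies inside some block $B_j$ (the triangle edges); a matching uses at most one edge at each of the pairwise non-adjacent $v_i$, and since $|B_j|=1+2t_j$ it uses at most $t_j$ edges inside $B_j$, so ${\rm m}(G)\le m+\sum_j t_j$. Combining the two bounds completes item (3).
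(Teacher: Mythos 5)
Your proposal is correct, and it is worth noting that it supplies substantially more than the paper does: the paper's proof of this theorem is essentially a list of citations --- item (1) is declared definitional, item (2) is outsourced to \cite[Proposition 1.3]{HKMT}, and item (3) is dismissed as ``easy to see by computing ${\rm im}(G)$.'' Your arguments for (1) and (3) are exactly what the paper intends (the block decomposition into $A_i$ and $B_j$ gives both the matching bound ${\rm m}(G)\le m+\sum_j t_j$ and the independence bound, and your explicit induced matching of leaf edges and triangle-base edges realizes the value). For (2), where the paper gives no argument at all, your route is a genuine self-contained proof: you identify $h_d=(-1)^{d-1}\widetilde\chi({\rm Ind}(G))$ and then pin down the homotopy type of the independence complex as $S^{m+\sum_j t_j-1}$ by iterating the leaf-suspension lemma at the private leaves $x^{(i)}_1$ and observing that connectivity of the bipartite part makes $\bigcup_i N[v_i]$ swallow everything except the triangle-base edges. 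All the steps check out: the $v_i$ are pairwise non-adjacent so the iteration is legitimate, the residual join of $\sum_j t_j$ copies of $S^0$ is $S^{\sum_j t_j-1}$, and the degenerate case $\sum_j t_j=0$ is handled by the usual convention $\Sigma^m S^{-1}=S^{m-1}$. One small stylistic suggestion: the suspension lemma is cleaner if you decompose ${\rm Ind}(G)$ at the support vertex $v$ rather than at the leaf $x$, namely ${\rm Ind}(G)={\rm Ind}(G-v)\cup\bigl(v\ast{\rm Ind}(G-N[v])\bigr)$ glued along ${\rm Ind}(G-N[v])$, where ${\rm Ind}(G-v)=x\ast{\rm Ind}(G-v-x)$ is a cone because $x$ becomes isolated; two contractible pieces glued along $A$ give $\Sigma A$. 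This buys a proof of the key equality $\deg h_{R/I(G)}(t)=\dim R/I(G)$ that does not depend on the external reference, at the cost of importing the (standard but nontrivial) topology of independence complexes.
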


\begin{proof}
  $(1)$ follows from the definition of a Cameron-Walker graph. See
  \cite[Proposition 1.3]{HKMT} for $(2)$. Statement $(3)$ is easy
  to see by computing ${\rm im}(G)$.
\end{proof}

The following class of Cameron-Walker graphs plays an important role 
in Section \ref{sec:CW}. 
\begin{construction}
  \label{const:CWabc}
 Fix $a,b \geq 1$ and $0 \leq c \leq b$.  Let $G = G_{a,b,c}$ be the Cameron-Walker
  graph whose bipartite part is the complete bipartite graph $K_{a,b}$, and 
  $s_1 = \cdots = s_a = 1$, $t_1=\cdots = t_c = 1$, 
  and $t_{c+1} = \cdots = t_b = 0$ (see Figure \ref{fig:Construction 2.4}). 
  
  \begin{figure}[htbp]
  \centering
\begin{xy}
	\ar@{} (0,0);(20, 12)  = "A";
	\ar@{.} "A";(130, 12)  = "B";
	\ar@{.} "A";(20, -12)  = "C";
	\ar@{.} "B";(130, -12)  = "D";
	\ar@{.} "C";"D";
	\ar@{} (0,0);(75,0) *{\text{$K_{a, b}$ \bf{on} $\{v_{1}, \ldots, v_{a}\} \cup \{w_{1}, \ldots, w_{b}\}$}};
	\ar@{} "A";(35, 13.5)  *\cir<4pt>{} = "E1";
	\ar@{-} "E1";(35, 24)  *\cir<4pt>{};
	\ar@{} "A";(65, 13.5)  *\cir<4pt>{} = "E2";
	\ar@{-} "E2";(65, 24)  *\cir<4pt>{};
	\ar@{} "A";(115, 13.5)  *\cir<4pt>{} = "E";
	\ar@{-} "E";(115, 24)  *\cir<4pt>{};
	\ar@{} (0,0);(35,8) *{\text{$v_{1}$}};
	\ar@{} (0,0);(65,8) *{\text{$v_{2}$}};
	\ar@{} (0,0);(90,4) *++!D{\cdots};
        \ar@{} (0,0);(115,8) *{\text{$v_{a}$}};
	\ar@{} (0,0);(90,16) *++!D{\cdots};
	\ar@{} "E1";(30, -13.5)  *\cir<4pt>{} = "F10";
	\ar@{-} "F10";(25, -24)  *\cir<4pt>{} = "F11";
	\ar@{-} "F10";(35, -24)  *\cir<4pt>{} = "Ft2";
	\ar@{-} "F11";"Ft2";
	\ar@{} "E1";(70, -13.5)  *\cir<4pt>{} = "F20";
	\ar@{-} "F20";(65, -24) *\cir<4pt>{} = "F21";
	\ar@{-} "F20";(75, -24)  *\cir<4pt>{} = "Ftt2";
	\ar@{-} "F21";"Ftt2";
	\ar@{} "E1";(80, -13.5)  *\cir<4pt>{}; 
	\ar@{} "E1";(120, -13.5)  *\cir<4pt>{} = "Fn0";
	\ar@{} (0,0);(30,-8) *{\text{$w_{1}$}};
	\ar@{} (0,0);(50,-12) *++!D{\cdots};
	\ar@{} (0,0);(70,-8) *{\text{$w_{c}$}};
	\ar@{} (0,0);(80,-8) *{\text{$w_{c + 1}$}};
	\ar@{} (0,0);(100,-12) *++!D{\cdots};
	\ar@{} (0,0);(120,-8) *{\text{$w_{b}$}};
	\ar@{} (0,0);(50,-24) *++!D{\cdots};
\end{xy}

\bigskip

  \caption{The Cameron--Walker graph $G_{a,b,c}$}
  \label{fig:Construction 2.4}
\end{figure}
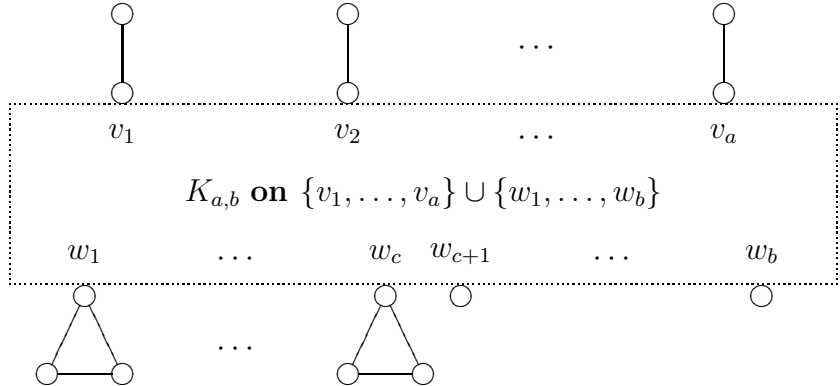
\end{construction}

\begin{example}
Let $a = 2, b = 3$, and $c = 2$. Then the graph $G_{2, 3, 2}$ is 
as in Figure \ref{fig:G2,3,2}. 
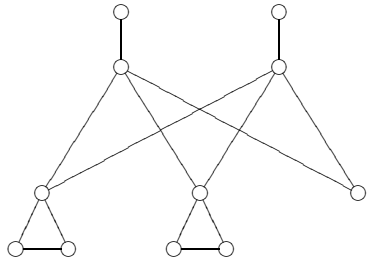
\begin{figure}[htbp]
  \hspace*{-4cm}  
  \centering
  \scalebox{.7}{
  \begin{xy}
	\ar@{} (0,0);(20, 12)  = "A";
	\ar@{} "A";(65, 13.5)  *\cir<4pt>{} = "E1";
	\ar@{-} "E1";(65, 24)  *\cir<4pt>{};
	\ar@{} "A";(95, 13.5)  *\cir<4pt>{} = "E2";
	\ar@{-} "E2";(95, 24)  *\cir<4pt>{};
	\ar@{-} "E1";(50, -10.5)  *\cir<4pt>{} = "F10";
	\ar@{-} "F10";(45, -21)  *\cir<4pt>{} = "F11";
	\ar@{-} "F10";(55, -21)  *\cir<4pt>{} = "Ft2";
	\ar@{-} "F11";"Ft2";
	\ar@{-} "E1";(80, -10.5)  *\cir<4pt>{} = "F20";
	\ar@{-} "F20";(75, -21) *\cir<4pt>{} = "F21";
	\ar@{-} "F20";(85, -21)  *\cir<4pt>{} = "Ftt2";
	\ar@{-} "F21";"Ftt2";
	\ar@{-} "E2";"F20";
	\ar@{-} "E2";"F10";
	\ar@{-} "E1";(110, -10.5)  *\cir<4pt>{} = "F"; 
	\ar@{-} "E2"; "F";
\end{xy}}
\caption{The Cameron--Walker graph $G_{2,3,2}$}\label{fig:G2,3,2}
\end{figure}

  \end{example}

As a direct application of Theorem \ref{CWformulas}, we
can compute some invariants of $G_{a,b,c}$. 
\begin{lemma}\label{specialfamily}
  Let $G = G_{a,b,c}$ be the Cameron-Walker
  graph as in Construction \ref{const:CWabc}.
  Then ${\rm reg}(R/I(G)) = a+c$ and $\deg h_{R/I(G)}(t) = a+b$.
\end{lemma}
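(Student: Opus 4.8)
The plan is to apply Theorem~\ref{CWformulas} directly to the graph $G = G_{a,b,c}$, reading off the parameters from Construction~\ref{const:CWabc} and substituting into the three formulas. The graph $G_{a,b,c}$ is a Cameron-Walker graph whose bipartite part is $K_{a,b}$, so in the notation of Figure~\ref{fig:CameronWalkerGraph} we have $m = a$, $p = b$, $s_1 = \cdots = s_a = 1$, $t_1 = \cdots = t_c = 1$, and $t_{c+1} = \cdots = t_b = 0$.

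For the regularity, Theorem~\ref{CWformulas}(3) gives ${\rm reg}(R/I(G)) = m + \sum_{j=1}^p t_j = a + (\underbrace{1 + \cdots + 1}_{c} + \underbrace{0 + \cdots + 0}_{b-c}) = a + c$. For the degree of the $h$-polynomial, Theorem~\ref{CWformulas}(2) gives $\deg h_{R/I(G)}(t) = \sum_{i=1}^m s_i + \sum_{j=1}^p \max\{t_j,1\} = a + (\underbrace{\max\{1,1\} + \cdots}_{c} + \underbrace{\max\{0,1\} + \cdots}_{b-c})$. The key point here is that $\max\{t_j,1\} = 1$ for every $j$, whether $t_j = 1$ (for $j \leq c$) or $t_j = 0$ (for $j > c$), so the second sum is simply $b$, yielding $\deg h_{R/I(G)}(t) = a + b$.

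There is no real obstacle; the only subtlety worth flagging is the $\max\{t_j,1\}$ in the $h$-polynomial formula, which accounts for the fact that a vertex $w_j$ with no pendant triangle ($t_j = 0$) still contributes $1$ to the dimension (via the leaf edge structure forcing $w_j$ into the picture appropriately). One should also note in passing that Construction~\ref{const:CWabc} indeed produces a legitimate Cameron-Walker graph: $K_{a,b}$ with $a,b \geq 1$ is connected bipartite, each $v_i$ has the required leaf edge since $s_i = 1 \geq 1$, and the condition $t_j \geq 0$ is satisfied, so Theorem~\ref{CWformulas} applies. This verification is immediate from the definitions and needs at most a sentence.
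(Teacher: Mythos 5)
Your proof is correct and is exactly the argument the paper intends: the paper states Lemma \ref{specialfamily} as a direct application of Theorem \ref{CWformulas} with $m=a$, $p=b$, $s_i=1$, and $t_j\in\{0,1\}$, and your substitution (including the observation that $\max\{t_j,1\}=1$ for every $j$) reproduces the claimed values $a+c$ and $a+b$.
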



\section{The regularity and $h$-polynomials of  Cameron-Walker graphs}
\label{sec:CW}

In this section, we prove our second main result, namely,
a characterization of the lattice points of $CW_{\mathcal{RD}} (n)$.
We then use this characterization to compute $|CW_{\mathcal{RD}}(n)|$.

\begin{theorem}\label{maintheorem1} For all $n \geq 5$,
  $(r,d) \in
  CW_{\mathcal{RD}}(n)$ if and only
  if
  \begin{enumerate}
  \item[$\bullet$] $2 \leq r \leq \lfloor \frac{n-1}{2} \rfloor$,
  \item[$\bullet$] $r \leq d \leq n -r $, and
  \item[$\bullet$] $d \geq -2r+n+1$.
    \end{enumerate}
\end{theorem}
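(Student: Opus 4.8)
The plan is to prove the two implications separately, the forward direction from the combinatorial formulas in Theorem~\ref{CWformulas} and the converse from the explicit family $G_{a,b,c}$ of Construction~\ref{const:CWabc}. Fix a Cameron-Walker graph $G$ drawn as in Figure~\ref{fig:CameronWalkerGraph}; then $m,p\ge 1$, each $s_i\ge 1$, each $t_j\ge 0$, and Theorem~\ref{CWformulas} records
\[
n=|V(G)|=m+p+\sum_{i=1}^m s_i+2\sum_{j=1}^p t_j,\qquad d=\sum_{i=1}^m s_i+\sum_{j=1}^p\max\{t_j,1\},\qquad r=m+\sum_{j=1}^p t_j .
\]

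For necessity, I would derive each of the three conditions in turn. The bound $r\le\lfloor\frac{n-1}{2}\rfloor$ is immediate from Lemma~\ref{RD}(3), since a Cameron-Walker graph is connected so $(r,d)\in\mathcal{RD}(n)$, and $d\le n-r$ is exactly \eqref{generalbound}. For $r\ge 2$: if $r=1$ then the formula forces $m=1$ and $t_1=\dots=t_p=0$, whence every $w_j$ is a leaf of $v_1$ and $G$ is the star $K_{1,p+s_1}$, contradicting the definition of a Cameron-Walker graph. For $r\le d$: since $s_i\ge 1$ and $\max\{t_j,1\}\ge t_j$ we get $d=\sum s_i+\sum\max\{t_j,1\}\ge m+\sum t_j=r$. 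Finally, $d\ge -2r+n+1$ is equivalent to $2r+d-n\ge 1$, and substituting the formulas gives $2r+d-n=m-p+\sum_{j=1}^p\max\{t_j,1\}\ge m\ge 1$, because each $\max\{t_j,1\}\ge 1$.

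For sufficiency, given $(r,d)$ satisfying the three conditions I would produce the graph $G_{a,b,c}$ whose parameters solve
\[
a+c=r,\qquad a+b=d,\qquad 2a+b+2c=n,
\]
namely $a=2r+d-n$, $b=n-2r$, $c=n-r-d$ (the last equation is $|V(G_{a,b,c})|=n$ by Theorem~\ref{CWformulas}(1), using $s_i=1$ and the specified $t_j$). The three conditions translate directly into $a\ge 1$ (from $d\ge -2r+n+1$), $b\ge 1$ (from $r\le\lfloor\frac{n-1}{2}\rfloor$), $c\ge 0$ (from $d\le n-r$), and $c\le b$ (from $r\le d$), so $G_{a,b,c}$ is a valid instance of Construction~\ref{const:CWabc}; and $a+c=r\ge 2$ ensures it is genuinely a Cameron-Walker graph (if $a=1$ then $c\ge 1$, so a pendant triangle is present and the graph is not a star; if $a\ge 2$ then leaf edges occur at the two distinct vertices $v_1,v_2$, so the graph is neither a star graph nor a star triangle). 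Then Lemma~\ref{specialfamily} gives ${\rm reg}(R/I(G_{a,b,c}))=a+c=r$ and $\deg h_{R/I(G_{a,b,c})}(t)=a+b=d$ with $|V(G_{a,b,c})|=n$, hence $(r,d)\in CW_{\mathcal{RD}}(n)$.

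I do not expect a serious obstacle here: once Theorem~\ref{CWformulas} and Lemma~\ref{specialfamily} are in hand the argument is essentially a change of variables between $(a,b,c)$ and $(r,d,n)$ together with a range check. The one point that needs care is to confirm that this change of variables maps the slice $\{2a+b+2c=n,\ a\ge 1,\ b\ge 1,\ 0\le c\le b\}$ onto precisely the displayed polytope and that no $G_{a,b,c}$ we use is a degenerate (star) graph, which is exactly where the hypothesis $r\ge 2$ (equivalently $a+c\ge 2$) enters.
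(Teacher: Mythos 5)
Your proposal is correct and follows essentially the same route as the paper: necessity via the formulas of Theorem~\ref{CWformulas} (with the same computation showing $2r+d-n\ge m\ge 1$), and sufficiency via $G_{a,b,c}$ with $a=d+2r-n$, $b=n-2r$, $c=n-r-d$. The only differences are cosmetic and both sound: you prove $r\le d$ directly from the formulas where the paper cites \cite[Theorem 3.1]{HKMT}, and you obtain $r\le\lfloor\frac{n-1}{2}\rfloor$ from Lemma~\ref{RD}(3) where the paper redoes the counting argument; your explicit check that $G_{a,b,c}$ is not a star or star triangle is a point the paper leaves implicit.
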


\begin{proof}
  The hypothesis $n \geq 5$ allows us to
  assume the conditions are not vacuous.

  Suppose $(r,d)$ satisfy all the above conditions.
  Let $G = G_{d + 2r - n, n - 2r, n - r - d}$ be the graph of
  Construction \ref{const:CWabc} and $R = K[V(G)]$. The conditions
  on $(r,d)$ imply $d+2r-n, n-2r \geq 1$ and $0 \leq n-r-d \leq n-2r$,
  so the graph $G$ is defined.
  Then $|V(G)| = n$ and Lemma \ref{specialfamily} says that 
  \begin{itemize}
    \item ${\rm reg}(R/I(G)) = (d + 2r - n) + (n - r - d) = r$, 
    \item $\deg h_{R/I(G)}(t) = (d + 2r - n) + (n - 2r) = d$. 
  \end{itemize}
  Thus one has $(r,d) \in CW_{\mathcal{RD}}(n)$. 
  
  We will now verify that all the $(r,d) \in CW_{\mathcal{RD}}(n)$ satisfy
  the given inequalities. We know that $r+d \leq n$ (which is equivalent to
  $d \leq n-r$) holds for all graphs by \cite[Theorem 13]{HMVT}
  (also see \eqref{generalbound}).
  For Cameron-Walker
  graphs, it was shown that $d \geq r$ in \cite[Theorem 3.1]{HKMT}.
  Consequently,
  $r \leq d \leq n-r$, as desired.

  We now show that $r \geq 2$ for any Cameron-Walker graph.
  Suppose that $r =1$.  Then by Theorem \ref{CWformulas} (3),
  we must have $m=1$ and $t_j =0$ for all $j$. But this then forces
  the graph to be the star graph $K_{1,n-1}$,
  which is not considered as a Cameron-Walker graph.  So $r \geq 2$.

  To show that $r \leq \lfloor \frac{n-1}{2} \rfloor$,
  it suffices to show that $r < \frac{n}{2}$ (if $n$ is even
  $\lfloor \frac{n-1}{2} \rfloor = \frac{n}{2}-1$, and if $n$ is odd,
  $\lfloor \frac{n-1}{2} \rfloor =
  \lfloor \frac{n}{2} \rfloor$).  Suppose for a contradiction that
  $r \geq \frac{n}{2}$.
  Since $n = m+p+2\sum_{j=1}^p t_j +\sum_{i=1}^ms_i$ and
  $\sum_{i=1}^m {s_i} \geq m$, we have $n \geq 2m+2\sum_{j=1}^p t_j+p$.  Thus
  \[r \geq \frac{n}{2} \geq m+\sum_{j=1}^p t_j + \frac{p}{2} > r\]
  where the last inequality follows from
  Theorem \ref{CWformulas} (3).  This gives the
  desired contradiction.  This paragraph and the previous paragraph now
  show $2 \leq r \leq \lfloor \frac{n-1}{2} \rfloor$.

  Finally, we show that $d \geq -2r + n+1$.  We first note
  that we can rewrite $d$
  as
  \[d = \sum_{i=1}^m s_i + \sum_{j=1}^p \max\{t_j,1\} = \sum_{i=1}^m s_i + \sum_{j=1}^p t_j +
  |\{j ~|~ t_j = 0\}|.\]
  We then have
  \begin{eqnarray*}
  & & d+2r-n-1 \\
   & = & \sum_{i=1}^m s_i + \sum_{j=1}^p t_j +
  |\{j ~|~ t_j = 0\}| + 2\left(m+\sum_{i=1}^p t_j\right) - \left(m+\sum_{i=1}^m s_i + p + 2\sum_{j=1}^p t_j\right) -1 \\
  &=& \left(\sum^p_{i=1} t_j +  |\{j ~|~ t_j = 0\}|  - p\right) +(m -1) \geq 0
  \end{eqnarray*}
  because    $\sum^p_{i=1} t_j +  |\{j ~|~ t_j = 0\}| \geq p$ and $m \geq 1$.  Thus
  we have $d \geq -2r+n+1$, as desired.
  \end{proof}

When $(r,d) \in CW_\mathcal{RD}(n)$, we have $r+d \leq n$ by \eqref{generalbound}
and so $r+d = n-e$ for some integer $e \geq 0$.
As an interesting consequence, the following theorem
gives a graph theoretical interpretation of this integer $e$.

\begin{theorem}
  Suppose that $G$ is a Cameron-Walker graph on $n$ vertices
  with $(r,d) = ({\rm reg}(R/I(G)),\deg h_{R/I(G)}(t))$.
  If $r+d = n-e$, then $G$ has at least $e$ pendant triangles.
In particular, if $r+d = n$, then $G$ has no pendant triangles.
\end{theorem}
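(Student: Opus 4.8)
The plan is to compute $r+d$ explicitly using the formulas of Theorem~\ref{CWformulas} and compare it with $n$. Writing $T = \sum_{j=1}^p t_j$ for the total number of pendant triangles, Theorem~\ref{CWformulas} gives
\[
r = m + T, \qquad d = \sum_{i=1}^m s_i + \sum_{j=1}^p \max\{t_j,1\}, \qquad n = m + p + \sum_{i=1}^m s_i + 2T.
\]
First I would add the expressions for $r$ and $d$ and subtract $n$ to obtain
\[
r + d - n = \Bigl(m + T + \sum_{i=1}^m s_i + \sum_{j=1}^p \max\{t_j,1\}\Bigr) - \Bigl(m + p + \sum_{i=1}^m s_i + 2T\Bigr) = \sum_{j=1}^p \max\{t_j,1\} - T - p.
\]
So $n - (r+d) = e$ becomes $e = T + p - \sum_{j=1}^p \max\{t_j,1\}$.

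Next I would analyze this quantity term by term over $j = 1,\dots,p$. For each $j$, compare $t_j$ with $\max\{t_j,1\}$: if $t_j \geq 1$ then $t_j + 1 - \max\{t_j,1\} = t_j + 1 - t_j = 1$, and if $t_j = 0$ then $t_j + 1 - \max\{t_j,1\} = 0 + 1 - 1 = 0$. Hence
\[
e = \sum_{j=1}^p \bigl(t_j + 1 - \max\{t_j,1\}\bigr) = \bigl|\{\, j \mid t_j \geq 1 \,\}\bigr|,
\]
the number of vertices $w_j$ that carry at least one pendant triangle. Since each such $w_j$ contributes $t_j \geq 1$ pendant triangles, the total number of pendant triangles is $\sum_{j : t_j \geq 1} t_j \geq \bigl|\{\, j \mid t_j \geq 1\,\}\bigr| = e$. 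This proves $G$ has at least $e$ pendant triangles. The final sentence is the case $e = 0$: then $\bigl|\{\,j \mid t_j \geq 1\,\}\bigr| = 0$, so every $t_j = 0$ and $G$ has no pendant triangles at all.

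There is no real obstacle here — the argument is a direct bookkeeping computation once the formulas from Theorem~\ref{CWformulas} are in hand. The only point requiring a moment of care is the case split $t_j = 0$ versus $t_j \geq 1$ in evaluating $t_j + 1 - \max\{t_j,1\}$, and noting that $e$ counts \emph{vertices} with pendant triangles whereas the claimed conclusion is about the \emph{number} of pendant triangles, so one must observe the (trivial) inequality $\sum_{j:t_j\geq 1} t_j \geq |\{j : t_j \geq 1\}|$ to bridge the two.
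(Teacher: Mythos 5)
Your proof is correct and follows essentially the same route as the paper: both compute $e = n - r - d$ directly from the formulas of Theorem~\ref{CWformulas} and identify $e$ as the number of vertices $w_j$ with $t_j \geq 1$, then observe that each such vertex carries at least one pendant triangle. The only cosmetic difference is that you evaluate $\sum_j \max\{t_j,1\}$ by a case split while the paper rewrites it as $\sum_j t_j + |\{j \mid t_j = 0\}|$ beforehand; these are the same computation.
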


\begin{proof}
  We have
  \begin{eqnarray*}
    e & = & n-r-d \\
    &= &\left(m+\sum_{i=1}^m s_i + p + 2\sum_{j=1}^p t_j\right) - \left(m + \sum_{j=1}^p t_j\right) -  \left(\sum_{i=1}^m s_i + \sum_{j=1}^p t_j +
    |\{j ~|~ t_j = 0\}|\right) \\
    &=& p- |\{j ~|~ t_j = 0\}|.
  \end{eqnarray*}
  So $e$ is the number of $j \in \{ 1, \ldots, p \}$ with $t_j \geq 1$,
  i.e., the vertices $w_j \in \{w_1,\ldots,w_p\}$
  that have a pendant triangle attached to it. 
  So, $e$ is a lower bound on the number
  of pendant triangles in $G$.

  \par
   Moreover if $e=0$, then 
   $p =  |\{j ~|~ t_j = 0\}|$.
   This means that $G$ has no pendant triangles. 
\end{proof}

Since $CW_{\mathcal{RD}}(n)$ is a lattice polytope, it is natural
to ask how many integer points are in this lattice.
Using Theorem \ref{maintheorem1} we can answer this question.

\begin{theorem}\label{|CW_{RD}(n)|}
  Fix an integer $n \geq 5$, and let $g = \lfloor \frac{n+1}{3} \rfloor$ and
  $f = \frac{n+1}{3} - g$  (so $f = 0,\frac{1}{3}$, or $\frac{2}{3}$).
  Then
  \[|CW_{\mathcal{RD}}(n)| = 
  \begin{cases}
    \frac{3g^{2}}{4} - 1 = \frac{1}{12} (n+1)^2 -1
      & \mbox{if $g$ is even and $f=0$}, \\
    \frac{3(g^{2}-1)}{4} - 1 = \frac{1}{12} (n+1)^2 - \frac{7}{4}
      & \mbox {if $g$ is odd and $f=0$}, \\
    \frac{g(3g + 2)}{4} - 2 = 
    \frac{1}{12} (n+6)(n-4)
      & \mbox{if $g$ is even and $f=\frac{1}{3}$}, \\
    \frac{(3g-1)(g+1)}{4} - 1 = 
    \frac{1}{12} (n-3)(n+5)
      & \mbox {if $g$ is odd and $f=\frac{1}{3}$}, \\
    \frac{g(3g+4)}{4} - 1 = 
     \frac{1}{12} (n-3)(n+5)
      & \mbox{if $g$ is even and $f=\frac{2}{3}$}, \\
    \frac{3g^{2} + 4g - 3}{4} - 1 = 
    \frac{1}{12} (n+6)(n-4)
      & \mbox {if $g$ is odd and $f=\frac{2}{3}$.}
    \end{cases}\]
\end{theorem}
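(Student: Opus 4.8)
The plan is to count lattice points in the polygon $P_n$ defined by the three inequalities of Theorem~\ref{maintheorem1}, namely $2 \le r \le \lfloor \frac{n-1}{2}\rfloor$, $r \le d \le n-r$, and $d \ge -2r+n+1$, by summing over the admissible values of $r$. For each fixed integer $r$ with $2 \le r \le \lfloor\frac{n-1}{2}\rfloor$, the set of admissible $d$ is the integer interval $[\max\{r,\,n+1-2r\},\, n-r]$, so the number of such $d$ is $n-r-\max\{r,\,n+1-2r\}+1$, provided this quantity is nonnegative (and one must check it is). The crossover between the two terms in the maximum happens at $r = \frac{n+1}{3}$: for $r \ge \lceil\frac{n+1}{3}\rceil$ the lower bound is $r$ and the count is $n-2r+1$, while for $r < \frac{n+1}{3}$ the lower bound is $n+1-2r$ and the count is $3r-n$. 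Thus
\[
|CW_{\mathcal{RD}}(n)| \;=\; \sum_{r=2}^{\lceil (n+1)/3\rceil - 1} (3r-n) \;+\; \sum_{r=\lceil (n+1)/3\rceil}^{\lfloor (n-1)/2\rfloor} (n-2r+1).
\]
Both summands are arithmetic progressions, so each sum has a closed form; the first is a sum of the increasing sequence $3r-n$ and the second a sum of the decreasing sequence $n-2r+1$ (which runs down to $1$ or $2$ depending on the parity of $n$ at the upper endpoint $\lfloor\frac{n-1}{2}\rfloor$).

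The next step is to introduce $g = \lfloor\frac{n+1}{3}\rfloor$ and $f = \frac{n+1}{3}-g \in \{0,\tfrac13,\tfrac23\}$ so that $\lceil\frac{n+1}{3}\rceil = g$ when $f=0$ and $\lceil\frac{n+1}{3}\rceil = g+1$ when $f \in \{\tfrac13,\tfrac23\}$, and to express the upper limit $\lfloor\frac{n-1}{2}\rfloor$ of the second sum in terms of $n$ mod $2$. This naturally produces a case split on the parity of $g$ and on the value of $f$ (six cases, matching the statement), since the parity of $g$ controls the endpoint $n-2r+1$ reached by the second arithmetic progression, and $f$ controls where the crossover index sits. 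In each of the six cases I would plug the explicit endpoints into the arithmetic-series formulas, simplify, and check that the resulting polynomial in $g$ agrees with the stated polynomial in $n$ via the substitution $n = 3g+3f-1$. I would also verify separately that the second sum has the correct sign/range at the crossover — i.e.\ that $\lceil\frac{n+1}{3}\rceil \le \lfloor\frac{n-1}{2}\rfloor$ for $n \ge 5$ so that neither sum is empty in a way that breaks the formula — and handle any small-$n$ boundary cases (e.g.\ $n=5,6,7$) by direct inspection to be safe.

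The main obstacle I expect is purely bookkeeping: getting the ceiling/floor boundary indices exactly right in all six parity-and-residue cases, and making sure the two arithmetic sums join up correctly at the crossover value of $r$ (in particular, whether the term at $r = \lceil\frac{n+1}{3}\rceil$ is counted once with the right formula, since at $f=0$ one has $3r-n = n-2r+1$ is generally false, so the two pieces genuinely use different formulas on disjoint index ranges and the split must be at the correct place). There is no conceptual difficulty — the inequalities of Theorem~\ref{maintheorem1} already hand us the polygon — but the algebra is error-prone, so I would double-check each closed form against a few numerical values of $n$ (for instance reconciling with the $n=8,9$ data implicit in Figure~\ref{figure1}, restricted to Cameron-Walker graphs) before declaring victory. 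Finally I would confirm that in each case the two equivalent expressions given in the statement — one in $g$, one in $n$ — are indeed equal under $n+1 = 3(g+f)$, which is a one-line check per case.
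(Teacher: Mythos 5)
Your overall strategy is the same as the paper's: fix $r$, count the admissible $d$ in the interval $[\max\{r,\,n+1-2r\},\,n-r]$, split the sum over $r$ at the crossover $r=\frac{n+1}{3}$, and evaluate two arithmetic progressions with a case analysis on $g$ and $f$. However, there is a concrete arithmetic error in the one step you actually carried out. For $r < \frac{n+1}{3}$ the lower bound is $n+1-2r$ and the upper bound is $n-r$, so the number of integers in that range is
\[
(n-r) - (n+1-2r) + 1 = r,
\]
not $3r-n$ as you claim (your expression looks like the count of integers from $n+1-2r$ up to $r$ rather than up to $n-r$; note that for $n=8$, $r=2$ it gives $-2$, which is impossible). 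With the correct counts, the two branch formulas $r$ and $n-2r+1$ agree exactly at $r=\frac{n+1}{3}$ when $f=0$ — the fact that your two formulas $3r-n$ and $n-2r+1$ disagree there, which you yourself remark on, is a symptom of this slip. The correct decomposition is
\[
|CW_{\mathcal{RD}}(n)| \;=\; \sum_{r=2}^{\lfloor (n+1)/3\rfloor} r \;+\; \sum_{r=\lfloor (n+1)/3\rfloor+1}^{\lfloor (n-1)/2\rfloor} (n-2r+1),
\]
which is what the paper uses. As written, your first sum is wrong in every case (e.g.\ for $n=8$ your formula yields $1$ while the true value is $5$), so the subsequent six-case evaluation would not reproduce the stated polynomials. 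Your stated plan to sanity-check against small $n$ would catch this, but the proof as proposed does not go through until the count $3r-n$ is replaced by $r$.
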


\begin{proof}
  By Theorem \ref{maintheorem1}, we have inequalities:
  \begin{displaymath}
    2 \leq r \leq \left\lfloor \frac{n-1}{2} \right\rfloor, \quad
    r \leq d \leq n-r,
    \quad d \geq -2r+n+1.
  \end{displaymath}
  We fix an integer $r$ with $2 \leq r \leq \lfloor (n-1)/2 \rfloor$. 
  When $r \leq n-2r+1$, namely $r \leq (n+1)/3$, 
  the number of $d$ satisfying $(r,d) \in CW_{\mathcal{RD}} (n)$ is $r$.
  Indeed, if $r \leq n-2r+1$, we have $n-2r+1 \leq d \leq n-r$, so
  there are $r$ possibilities for $d$.
  When $r > n-2r+1$, namely $r > (n+1)/3$, 
  the number of $d$ satisfying $(r,d) \in CW_{\mathcal{RD}} (n)$ is
  $n-2r+1$.  To see this, in this range, we must have
  $r \leq d \leq n-r$, so $d = r+i$ with $i=0,\ldots,n-2r$.
  Summing up $d$ for all $r$, we can compute $|CW_{\mathcal{RD}} (n)|$. 

  Note the number of lattice points will thus depend upon knowing
  the exact value of $\frac{n+1}{3}$.  In particular, from the previous
  paragraph
  \begin{equation}\label{cwformula}
  |CW_{\mathcal{RD}}(n)| = \sum_{r=2}^{\lfloor \frac{n+1}{3} \rfloor} r
    + \sum_{r = \lfloor \frac{n+1}{3} \rfloor +1}^{\lfloor \frac{n-1}{2} \rfloor}
    (n-2r+1).
    \end{equation}
    If $f=0$, then $g = \frac{n+1}{3}$,
    and consequently, $3g = n +1$.  Plugging this information into
    \eqref{cwformula}, we
    get
    \[|CW_{\mathcal{RD}}(n)| = \sum_{r=2}^g r + \sum_{r = g+1}^{\lfloor \frac{3g-2}{2}
      \rfloor} (3g-2r).\]
    If $g$ is even, then  $\lfloor \frac{3g-2}{2} \rfloor = \frac{3g-2}{2}$, and
    consequently,
    \[|CW_{\mathcal{RD}}(n)| = 2+3+\cdots + g + (g-2)+(g-4)+ \cdots + 2 =
    \frac{3g^{2}}{4} - 1.\]
      On the other hand, if $g$ is odd, then $\lfloor \frac{3g-2}{2} \rfloor =
      \frac{3g-3}{2}$, and consequently,
      \[|CW_{\mathcal{RD}}(n)| = 2+3+\cdots + g + (g-2)+(g-4)+ \cdots + 5+3 =
        \frac{3(g^{2}-1)}{4} - 1. \]
        Because $n=3g-1$, we can rewrite both expressions in terms of $n$
        and derive the stated formulas.
        
        The other cases are computed in a similar fashion,
        so we have omitted the details.
\end{proof}

The next result is an immediate corollary of Theorem \ref{|CW_{RD}(n)|}.

\begin{corollary}
  \[\lim_{n \rightarrow \infty} \frac{|CW_\mathcal{RD}(n)|}{n^2} = \frac{1}{12}.\]
  \end{corollary}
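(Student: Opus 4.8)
The plan is to apply Theorem \ref{|CW_{RD}(n)|} directly: each of the six stated formulas expresses $|CW_{\mathcal{RD}}(n)|$ as a quadratic polynomial in $n$ with leading coefficient $\frac{1}{12}$. Specifically, the closed forms on the right-hand sides are $\frac{1}{12}(n+1)^2 - 1$, $\frac{1}{12}(n+1)^2 - \frac{7}{4}$, $\frac{1}{12}(n+6)(n-4)$, $\frac{1}{12}(n-3)(n+5)$, $\frac{1}{12}(n-3)(n+5)$, and $\frac{1}{12}(n+6)(n-4)$; in every case, dividing by $n^2$ and letting $n \to \infty$ sends the lower-order terms to $0$ and leaves $\frac{1}{12}$.

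First I would observe that, for each fixed residue of $n$ modulo $3$ together with the parity of $g = \lfloor \frac{n+1}{3} \rfloor$, the value $|CW_{\mathcal{RD}}(n)|$ is one of these six quadratics in $n$. Since each such quadratic $q(n)$ satisfies $q(n)/n^2 \to \frac{1}{12}$, and since the finitely many residue/parity classes partition all sufficiently large $n$, the sequence $|CW_{\mathcal{RD}}(n)|/n^2$ converges to $\frac{1}{12}$. Concretely, one can bound: for any $\varepsilon > 0$, there is an $N$ so that for all $n \geq N$ each of the six expressions lies within $\varepsilon n^2$ of $\frac{n^2}{12}$, hence so does $|CW_{\mathcal{RD}}(n)|$ regardless of which case applies.

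There is no real obstacle here; the only mild point of care is that the classification in Theorem \ref{|CW_{RD}(n)|} is a genuine case split, so the argument must note that the limit is the same in all cases rather than computing along a single subsequence. If one preferred to avoid cases entirely, an alternative is to bound $|CW_{\mathcal{RD}}(n)|$ directly from \eqref{cwformula}: the sum $\sum_{r=2}^{\lfloor (n+1)/3 \rfloor} r + \sum_{r=\lfloor (n+1)/3 \rfloor + 1}^{\lfloor (n-1)/2 \rfloor}(n-2r+1)$ is squeezed between two explicit quadratics of the form $\frac{n^2}{12} + O(n)$, which again yields the limit $\frac{1}{12}$. Either route is a one-line consequence of the preceding theorem, so I would simply cite Theorem \ref{|CW_{RD}(n)|} and take limits.
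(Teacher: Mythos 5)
Your proof is correct and matches the paper's approach: the paper states this as an ``immediate corollary'' of Theorem \ref{|CW_{RD}(n)|}, and your observation that all six case formulas are quadratics in $n$ with leading coefficient $\frac{1}{12}$ (so the limit is the same along every residue/parity class) is exactly the intended argument. Your care about the case split, and the alternative squeeze via \eqref{cwformula}, are both sound but not needed beyond what you wrote.
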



\section{Future directions}

We conclude this paper with a question inspired by the results
of this paper. It would be interesting to compare the number of
integer points in $CW_{\mathcal{RD}}(n)$ to the number of integer points
in $\mathcal{RD}(n)$.  In particular, one might
wish to know what percentage of possible $(r,d) = ({\rm reg}(R/I(G)),
\deg h_{R/I(G)}(t))$ can be realized by Cameron-Walker graphs.
Thus, an answer to the following question would be of interest:
\begin{question}\label{limitquestion}
  What is the value of
  \[\lim_{n \rightarrow \infty} \frac{|CW_{\mathcal{RD}}(n)|}{|\mathcal{RD}(n)|}? \]
  \end{question}

It is not clear that this limit exists due, in part, to the fact
that we can only bound $|\mathcal{RD}(n)|$ (see Theorem \ref{maintheorem}).
Observe that to show that this limit exists, it is enough
  to show that $\frac{|CW_{\mathcal{RD}}(n)|}{|\mathcal{RD}(n)|} \leq
  \frac{|CW_{\mathcal{RD}}(n+1)|}{|\mathcal{RD}(n+1)|}$ for all $n$ since
  $\frac{|CW_{\mathcal{RD}}(n)|}{|\mathcal{RD}(n)|} \leq 1$,
  and then one can use the fact that we have a
  bounded monotic increasing sequence.

  If we assume that the limit exists, we can give a partial answer
  to Question \ref{limitquestion}.

\begin{theorem}  Suppose that $\lim_{n \rightarrow \infty} \frac{|CW_{\mathcal{RD}}(n)|}{|\mathcal{RD}(n)|}$ exists.  Then
 \[ \frac{2}{9} \leq \lim_{n \rightarrow \infty} \frac{|CW_{\mathcal{RD}}(n)|}{|\mathcal{RD}(n)|} \leq \frac{1}{3}.\]
\end{theorem}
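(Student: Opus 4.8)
The plan is to sandwich $|\mathcal{RD}(n)|$ between $|A(n)|$ and $|B(n)|$ using Theorem~\ref{maintheorem}, compute the leading-order asymptotics of $|A(n)|$, $|B(n)|$, and $|CW_{\mathcal{RD}}(n)|$, and then pass to the limit of the ratio. Since $A(n) \subseteq \mathcal{RD}(n) \subseteq B(n)$, we have $|A(n)| \leq |\mathcal{RD}(n)| \leq |B(n)|$, hence
\[
\frac{|CW_{\mathcal{RD}}(n)|}{|B(n)|} \leq \frac{|CW_{\mathcal{RD}}(n)|}{|\mathcal{RD}(n)|} \leq \frac{|CW_{\mathcal{RD}}(n)|}{|A(n)|}.
\]
Assuming the limit of the middle term exists, it is enough to compute $\lim_n |CW_{\mathcal{RD}}(n)|/|B(n)|$ and $\lim_n |CW_{\mathcal{RD}}(n)|/|A(n)|$ and observe that the first gives the lower bound $2/9$ and the second gives the upper bound $1/3$.

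First I would compute $|B(n)|$ asymptotically. Counting lattice points in $B(n)$: for each $r$ with $1 \leq r \leq \lfloor (n-1)/2 \rfloor$ there are $n-r$ choices of $d$ (with $1 \leq d \leq n-r$). So $|B(n)| = \sum_{r=1}^{\lfloor (n-1)/2 \rfloor} (n-r)$, which is a sum of an arithmetic progression and evaluates to $\tfrac{3}{8}n^2 + O(n)$; in particular $\lim_n |B(n)|/n^2 = 3/8$. Next, for $|A(n)|$: the region $A(n)$ is $B(n)$ intersected with the half-plane $r - d \leq 1$, i.e.\ we additionally require $d \geq r-1$. For each $r$ with $1 \leq r < \lfloor (n-1)/2 \rfloor$ the number of valid $d$ is $(n-r) - (r-1) + 1 = n - 2r + 2$. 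Summing $\sum_{r=1}^{\lfloor(n-1)/2\rfloor - 1}(n-2r+2)$ gives $\tfrac{1}{4}n^2 + O(n)$, so $\lim_n |A(n)|/n^2 = 1/4$. Combined with the Corollary just above (or directly from Theorem~\ref{|CW_{RD}(n)|}), $\lim_n |CW_{\mathcal{RD}}(n)|/n^2 = 1/12$. Dividing, $\lim_n |CW_{\mathcal{RD}}(n)|/|B(n)| = (1/12)/(3/8) = 2/9$ and $\lim_n |CW_{\mathcal{RD}}(n)|/|A(n)| = (1/12)/(1/4) = 1/3$. The squeeze then yields the claimed bounds.

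The main obstacle is not any single hard step but rather getting the lattice-point counts for $A(n)$ and $B(n)$ right, including the floor functions and the boundary inequalities (e.g.\ whether the constraint in $A(n)$ is a strict or non-strict inequality on $r$, and how the $d \leq n-r$ versus $d \geq r-1$ endpoints interact when $r$ is near $n/3$). Since we only need the leading $n^2$ coefficient, all $O(n)$ discrepancies from floors and endpoint conventions wash out in the limit, so the computation is genuinely routine; the only real content is the squeeze argument and invoking $A(n) \subseteq \mathcal{RD}(n) \subseteq B(n)$ from Theorem~\ref{maintheorem} together with the asymptotic count of $|CW_{\mathcal{RD}}(n)|$ from Theorem~\ref{|CW_{RD}(n)|}.
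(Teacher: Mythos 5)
Your proposal is correct and follows essentially the same route as the paper: both squeeze $|\mathcal{RD}(n)|$ between the counts coming from Theorem \ref{maintheorem} (the paper's explicit bounds $\left(\lfloor n/2\rfloor\right)^2 \leq |\mathcal{RD}(n)| \leq \binom{n}{2}-\binom{\lceil (n+1)/2\rceil}{2}$ are just $|A(n)|$ and $|B(n)|$ asymptotically, i.e.\ $\tfrac{1}{4}n^2$ and $\tfrac{3}{8}n^2$), combine with $|CW_{\mathcal{RD}}(n)| \sim \tfrac{1}{12}n^2$ from Theorem \ref{|CW_{RD}(n)|}, and pass to the limit. The minor off-by-one issues in your lattice-point counts (e.g.\ the $r=1$ row of $A(n)$) are harmless since only the leading coefficient matters.
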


\begin{proof}
  Note that by Theorem \ref{|CW_{RD}(n)|}, we always have
  $|CW_{\mathcal{RD}}(n)| =\frac{1}{12}(n+a)(n+b) + c$ for some
  $a,b$ and $c$ that satisfy $-4 \leq a,b \leq 6$ and $-\frac{7}{4} \leq
  c \leq 0$.  Thus, for all $n \geq 5$,
  \[\frac{1}{12}(n-4)(n-4)-\frac{7}{4} \leq 
|CW_{\mathcal{RD}}(n)| \leq \frac{1}{12}(n+6)(n+6).\]

 Using the fact that if $(r,d) \in \mathcal{RD}(n)$, then $r+d \leq n$ and $1 \leq r \leq
  \lfloor \frac{n-1}{2} \rfloor$, we get an upper bound
  \[|\mathcal{RD}(n)| \leq \binom{n}{2} -
  \binom{\lceil \frac{n+1}{2} \rceil}{2},\] 
  where we use the fact that $n-1 < \lfloor \frac{n-1}{2} \rfloor
  + \lceil \frac{n+1}{2} \rceil< n+1$.
  Combining this bound with the lower bound for $|CW_{RD}(n)|$ above gives
  
  \[\frac{|CW_{\mathcal{RD}}(n)|}{|\mathcal{RD}(n)|} \geq
  \frac{\frac{1}{12}(n-4)(n-4)-\frac{7}{4}}{ \binom{n}{2} - \binom{\lceil \frac{n+1}{2} \rceil}{2}}.\]
Letting $n \rightarrow \infty$ on the right hand side gives $\frac{2}{9}$.  

Moreover, by Theorem \ref{maintheorem},we get a lower bound
\begin{eqnarray*}
|\mathcal{RD}(n)| &\geq&  \left(\left\lfloor \frac{n}{2}  \right\rfloor\right)^{2} . 
\end{eqnarray*}
Hence we have the bound
\[
\frac{|CW_{\mathcal{RD}}(n)|}{|\mathcal{RD}(n)|} \leq
\frac{\frac{1}{12}(n+6)(n+6)}{\left(\left\lfloor \frac{n}{2}  \right\rfloor\right)^{2} }
.\]
Letting $n \rightarrow \infty$ on the right hand side gives $\frac{1}{3}$. 
\end{proof}



\bibliographystyle{plain}

\end{document}